\theoremstyle{plain}
  \newtheorem{theorem}{Theorem}
  \newtheorem{lemma}{Lemma}
  \newtheorem{corollary}{Corollary}
\theoremstyle{remark}
\theoremstyle{definition}
  \newtheorem{definition}{Definition}
\newcommand{\R}{{\mathbb R}}
\newcommand{\F}{{\mathbb F}}
\newcommand{\C}{{\mathbb C}}
\title{The Fuglede Conjecture holds in \(\F_p^3\) for \(p=5,7\)}
\date{\today}
\author[$\dagger$]{Thomas Fallon} 
\author[$\star$]{Azita Mayeli} 
 \author[$\ddag$]{Dominick Villano}
 \affil[$\dagger$$\star$]{Department of Mathematics, The City University of New York \\ The Graduate Center, New York, NY  10016, USA} 
\affil[$\ddag$]{Department of Mathematics, University of Pennsylvania, 209 South 33rd Street
Philadelphia, PA 19104-6395, USA}
\begin{document}

\title{The Fuglede Conjecture holds in \(\F_p^3\) for \(p=5,7\)}



\date{}
\maketitle


\begin{abstract}
For \(p=5,7\), we show that a subset \(E \subset \F_p^3\) is spectral if and only if \(E\) tiles \(\F_p^3\) by translation.  Additionally, we give an alternate proof that the conjecture holds for \(p=3\).
\end{abstract}

\maketitle

\section{Introduction}

Let \(p\) be a prime number, \(\F_p\) the finite field with \(p\) elements, and \(\F_p^d\) the \(d\)-dimensional vector space over \(\F_p\).

\begin{definition}
A subset \(E \subset \F_p^d\) is called \emph{spectral} if there exists a subset \(A \subset \F_p^d\) such that
\[
\left\{  \chi_a \left(x \right) := e^{\frac{2\pi i }{p} x \cdot a } \mid a \in A\right\}
\]
forms an orthogonal basis of the complex vector space \(L^2 \left( E \right)\).  We then say \(A\) is a \emph{spectrum} of \(E\) and that \(\left(E, A \right)\) is a \emph{spectral pair}.
\end{definition}

\begin{definition}
A subset \(E \subset \F_p^d\) is said to \emph{tile \(\F_p^d\) by translations}, and simply called a \emph{tile}, if there exists a subset \(A \subset \F_p^d\) such that
\[
\sum_{a \in A} E \left( x-a \right) = 1
\]
for all \(x \in \F_p^d\).  Here, \(E \left( \cdot \right) \) denotes the indicator function of the set \(E\).  We call \(A\) a \emph{tiling set} of \(E\) and \(\left( E, A \right) \) a  \emph{tiling pair}.
\end{definition}

The  interest in the connection between spectral sets and tiles arises from a conjecture of Fuglede in \cite{FugMain}, which states that for a subset  \(\Omega \subset \R^d\) with  positive and  finite Lebesgue measure, \(L^2 \left( \Omega \right)\) has an orthonormal basis of exponentials  if and only if \(\Omega\) tiles \(\R^d \) by translations.  In \cite{TaoFug}, Tao disproved the Fuglede conjecture in \(\R^d\) for dimensions \(d \geq5 \) by lifting a non-tiling spectral set in \(\F_3^5\) to Euclidean space.  Examples of non-spectral tiles were found by Kolountzakis and Matolcsi in \(\R^d\), \(d \geq 5\), \cite{KMTilenospectra}.  Fuglede's conjecture was disproved in \(\R^4\) and \(\R^3\) in both directions, too; see \cite{MFugdim4, FRSFourTiles, BMPUni, MKComplexHad}.  However, the conjecture remains open in \(\R^2\) and \(\R\).  For partial results in special cases, see e.g.  \cite{IKTConvexFug, LabaTwoIntFug, LabaRootsTile}.

The methods of \cite{TaoFug} sparked interest in the discrete setting.  The examples of Tao from the previous paragraph show that the Fuglede conjecture is false in \(\F_p^d\) for \(d \geq 5\).   In \cite{IosevichCrew}, it is shown that the conjecture is   false for $\Bbb F_p^5$ for all odd primes $p$ and for  \(\F_p^4\) for all odd primes $p$ but the conjecture is true for $\F_2^4$ and  \(\F_p^3\)  when \(p=2,3\)  \cite{IosevichCrew,SamNatFug, SamMatt}.  In  \cite{IMPFug}, Iosevich, the second listed author  and Pakianathan proved that the Fuglede conjecture holds in \(\F_p^2\) for all primes $p$ by proving an equidistribution theorem that is conceptually analogous to arguments involving roots of unity in \cite{LabaTwoIntFug, LabaRootsTile}.   For prime $p$,  the conjecture is trivially true in \(\F_p\). Therefore,  the only remaining case is \(\F_p^3\), for $p>3$ and prime.  The main contribution of this paper  is the following result.

\begin{theorem} \label{thrm:main}
The Fuglede Conjecture holds in \(\F_5^3\) and \(\F_7^3\).
\end{theorem}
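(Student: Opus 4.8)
The plan is to reduce to the two nontrivial cardinalities $|E|=p$ and $|E|=p^2$ (the cases $|E|\in\{1,p^3\}$ being immediate, and a spectral set or a tile in $\F_p^3$ having cardinality a power of $p$), and to exploit the structure of vanishing sums of $p$-th roots of unity. Write $\zeta=e^{2\pi i/p}$. For nonzero $\xi$, $\widehat E(\xi)=\sum_{x\in E}\chi_\xi(x)$ is a sum of $|E|$ many $p$-th roots of unity; since $1+x+\cdots+x^{p-1}$ is the minimal polynomial of $\zeta$ over $\mathbb Q$, such a sum vanishes precisely when its terms split into complete sets of $p$-th roots of unity, i.e.\ when each hyperplane $\{x:x\cdot\xi=t\}$ ($t\in\F_p$) meets $E$ in exactly $|E|/p$ points; call such a $\xi$ an \emph{equidistribution direction} for $E$. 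Equidistribution is unchanged on replacing $\xi$ by $c\xi$, so $Z(\widehat E)$ is a union of punctured lines through $0$, recorded as a subset $\mathcal S(E)\subseteq\mathbb P^2(\F_p)$; and both ``spectral'' and ``tiles'' are invariant under $\mathrm{AGL}_3(\F_p)$, acting on $\mathbb P^2(\F_p)$, so we may normalize $E$.

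For $|E|=p$, $\mathcal S(E)$ is the set of directions along which $E$ is a transversal of the hyperplanes. If $\mathcal S(E)\ne\emptyset$, pick a line $L$ representing a point of it: then $L$ is a spectrum of $E$ and a coset of $L^{\perp}$ is a tiling complement of $E$, both by a direct check of the Fourier criterion. Conversely, if $E$ is spectral with spectrum $A$ then $|A|=p$ and $(A-A)\setminus\{0\}\subseteq Z(\widehat E)$ is nonempty, so $\mathcal S(E)\ne\emptyset$; and if $E$ tiles, its complement $A$ has $1<|A|=p^2<p^3$, so $\widehat A$ is nonconstant, $\widehat A(\xi)\ne 0$ for some $\xi\ne 0$, and $\widehat E(\xi)=0$, again giving $\mathcal S(E)\ne\emptyset$. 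Hence for $|E|=p$: spectral $\iff\mathcal S(E)\ne\emptyset\iff$ tiles.

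The heart of the matter is $|E|=p^2$, where $\mathcal S(E)$ consists of the directions along which every hyperplane carries exactly $p$ points of $E$. Two facts are easy. First, if $\mathcal S(E)$ contains a full projective line --- equivalently $Z(\widehat E)$ contains a punctured $2$-plane $V$ --- then a coset of $V$ is a spectrum of $E$ and $V^{\perp}$ is a tiling complement, so $E$ is both spectral and a tile. Second, if $E$ tiles with complement $A$ ($|A|=p$), then $\mathbb P^2(\F_p)\setminus\mathcal S(A)=\bigcup_{d\in(A-A)\setminus\{0\}}\mathbb P(\ang{d}^{\perp})$ is a nonempty union of projective lines, and the tiling identity $\widehat E\,\widehat A=p^3\delta_0$ forces it into $\mathcal S(E)$; so $\mathcal S(E)$ contains a projective line and $E$ is spectral by the first fact. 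It therefore remains to prove the single implication \emph{$E$ spectral $\implies$ $\mathcal S(E)$ contains a projective line}. Here the spectrum $A$ has $|A|=p^2$; spectral pairs being symmetric, one has $(A-A)\setminus\{0\}\subseteq Z(\widehat E)$ and $(E-E)\setminus\{0\}\subseteq Z(\widehat A)$. If $E$ or $A$ lies in a $2$-plane it is the whole $2$-plane, and then $E$ is readily seen to be both spectral and a tile; so assume $E$ and $A$ each span $\F_p^3$. Fixing an equidistribution direction $n_0\in\mathcal S(E)$ (nonempty since $A-A\ne\{0\}$) slices $E$ into $p$ sets of size $p$ in parallel copies of $\F_p^2$, to which the solved two-dimensional theory applies; combining this with the requirement that $A-A$ stay inside the union-of-lines set $\mathcal S(E)$, and using the corresponding slicing of $A$, one aims to force $\mathcal S(E)$ to contain an entire projective line.

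The main obstacle is exactly this last step: excluding a spanning $p^2$-element spectral set $E$ whose equidistribution set meets every projective line but contains none of them. The difficulty is to control the interplay between the hyperplane data defining $\mathcal S(E)$, the line-direction data $\overline{A-A}$, and the coupling of the $p$ planar slices of $E$; this is a genuinely combinatorial problem which, once $p$ is fixed, can be organized into a finite verification over the surviving configurations. That verification is carried out for $p=3,5,7$ --- which is where the restriction on $p$ enters --- whereas a uniform bound on the number of configurations for all $p$ is precisely what the method does not yet provide.
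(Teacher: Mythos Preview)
There is a genuine gap at the very first step. You assert that ``a spectral set or a tile in $\F_p^3$ [has] cardinality a power of $p$'' and then only analyze $|E|\in\{1,p,p^2,p^3\}$. For tiles this is fine, since $|E|\cdot|A|=p^3$. But for spectral sets it is \emph{not} known a priori: the general structural result (Theorem~\ref{IMPdimfree}) gives only $|E|=1$ or $|E|=mp$ with $1\le m\le p$, and spectral sets of non-prime-power size do exist in higher dimension (Tao's construction is exactly this). Showing that no spectral set of size $mp$ with $2\le m\le p-1$ exists in $\F_p^3$ is not a preliminary reduction---it is the entire content of the theorem. Indeed, the paper's quoted equivalence (Theorem~\ref{prop:easyred}) says precisely that Fuglede holds in $\F_p^3$ if and only if no such intermediate-size spectral sets exist; the cases $|E|=p$ and $|E|=p^2$ you work through are already subsumed in that cited reduction.

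Consequently your outline never touches the actual work: for $p=5$ one must exclude spectral sets of sizes $10,15,20$, and for $p=7$ sizes $14,21,28,35,42$. The paper does this via non-concentration estimates (a spectral pair of size $mp$, $2\le m\le p-1$, meets every plane in at most $p$ points and every line in at most $\min(m,p-m)$ points), a Plancherel argument forcing equidistribution in $p-1$ directions of a plane to upgrade to containment in $k$ parallel planes, and then prime-specific combinatorial lemmas in $\F_7^2$ (direction-counting for $5$- and $7$-point sets with collinearity constraints, plus Cauchy--Davenport) to finish the cases $|E|=14,21,28$. Your ``finite verification over the surviving configurations'' is aimed at the wrong target ($|E|=p^2$ rather than $|E|=mp$, $2\le m\le p-1$), so the plan as written does not prove the theorem.
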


The proof of Theorem \ref{thrm:main} also yields an alternative  proof for the \(p=3\) case, which was originally covered in \cite{IosevichCrew}.   A computer-assisted proof for Fuglede\rq{}s conjecture in \(\F_5^3\) is recently presented in  \cite{PBirkFug}.

The main strategy of the proof is to examine the behavior of a prospective spectral set on one and two dimensional subspaces.  Namely, no large proportion of a spectral set's points can be contained in any given (affine) line or plane.  These estimates lead to combinatorial problems that, in general, seem to be difficult, but can be solved when \(p\) is very small.  This is one of the main difficulties in extending our methods beyond \(p=7\) and even represents the major technical improvement required to move from \(p=5\) to \(p=7\).

The paper is organized as follows: In Section \ref{sectionbackground}, we fix notation and introduce relevant previous results.    In Section \ref{sectionconcentrate} we prove various auxiliary lemmas that apply to any prime \(p\) and provide a short proof for the Fuglede Conjecture in \(\F_3^3\).  In Section \ref{sectionplancherel} we give a sufficient condition for a set \(E\) to be contained in the union of \(k\) parallel planes.  We then give a lemma concerning spectral sets that are contained in such a union, which again holds for any prime \(p\).  Then, as a corollary, we prove the Fuglede conjecture in \(\F_5^3\). 

In Section \ref{sectionseven}, we focus only on the case \(p=7\).  We first give Lemma \ref{Lm1} stating that a set with five points in \(\F_7^2\) with no three colinear points must determine at least 6 directions.  As corollaries, we show there are no spectral sets of size \(28\) or \(14\). In subsection \ref{section21notspectral}, Lemma \ref{Lm2} states that a set with seven points in \(\F_7^2\) with no four colinear points must  determine at least 6 directions.  Lemma \ref{twoparalellines} shows that  if there were a spectral set of size 21 that determines no more than two directions of two distinct planes, it must be contained in the union of seven parallel lines.  Lemma \ref{twentyoneproj} concerns equidistribution of functions \(f \colon \F_7^2 \to \{0,1,2,3\}\). This leads to Corollary \ref{21} proving that there are no spectral sets of size 21, completing the proof of the Fuglede Conjecture in \(\F_7^3\).

Applying the techniques of this paper to a few more primes seems possible.  A straightforward approach would entail more case-by-case analysis.  However, to handle all primes at once more robust geometric estimates would probably have to be established.  For example, the non-concetration lemmas described above rule out only a proportion of configurations that goes to zero as \(p\) grows.

\section*{Acknowledgements}

The authors would like to thank Philip Gressman for many helpful discussions. 

\section{Preliminaries and Notation} \label{sectionbackground}

A line in  \(\F_p^3\) will be any translate of a one-dimensional subspace of \(\F_p^3\), and a  plane will be any translate of a two-dimensional subspace of \(\F_p^3\).

These definitions hold in every dimension, and we are particularly interested in the case where the dimension is 3.  For any subspace \(S\), its orthogonal subspace will be denoted by \(S^\perp\).  In symbols:
\[
S^\perp:=\left\{ x \mid x \cdot y= 0 \ \text{for all} \ y \in S \right\}.
\]
As in the Euclidean case, \(\dim(S) + \dim(S^\perp)\) is the dimension of the whole space.
A direction will mean any one-dimensional subspace and be denoted by \(d\).  The family of planes orthogonal to \(d\) refers to the \(p\) distinct translates of \(d^\perp\).  If \( y \in d\) is nonzero, then we abuse notation and use \(y^\perp\) and  \(d^\perp\) interchangeably.
If \(E \subset \F_p^3\) and \(e_1-e_2 \in d\) for two distinct \(e_1, e_2 \in E\), we say \(E\) determines \(d\).  For a more general notion of determining directions, applicable beyond one dimensional subspaces, see \cite{IMPFug}.  Consideration of direction sets over finite fields originates in \cite{IMPDir}.  Counting directions, we have that in \(\F_p^d\) there are \(\frac{p^d-1}{p-1}\) directions, and in particular, a line has a single direction and a plane has \(p+1\) directions.

The Fourier transform of a function \(f \colon \F_p^d \to \C\) is defined as
\[
	\hat{f}(\xi) := p^{-d} \sum_{x \in \F_p^d} f(x) e^{-\frac{2 \pi i}{p} x \cdot \xi}, \forall \xi \in \F_p^d
\]
where the normalization is chosen so that the reconstruction formula is
\[
	f(x) = \sum_{\xi \in \F_p^d} \hat{f}(\xi)  e^{\frac{2 \pi i}{p} x \cdot \xi}
\]
and Plancherel's identity is 
\begin{align} \label{eqn:plancherel}
	\sum_{x \in \F_p^d} |f(x)|^2 = p^d \sum_{\xi \in \F_p^d} |\hat{f}(\xi)|^2.
\end{align}

In particular, for a set \(E \subset \F_p^d\), its indicator function will also be denoted by \(E\) and the Fourier transform of its indicator function is denoted by \(\hat{E}\) and is defined as
\[
\hat{E} \left( \xi \right) := p^{-d} \sum_{x\in E} e^{-\frac{2 \pi i }{p} x \cdot \xi} .
\]

The following result holds in all dimensions.  The language has been changed in order to fit the terminology of the previous paragraphs.

\begin{theorem} \label{equidist} \cite{HIPSVPack}
Suppose \(E \subset \F_p^3\) and \(\xi \in \F_p^3\), \(\xi \neq 0\).  Then \(\hat{E} \left( \xi \right) = 0\) if and only if \(E\) is equidistributed on the planes orthogonal to \(\xi\), meaning \(\left| E \cap \tau_x \left(\xi^\perp \right) \right|\) is constant as a function of \(x\in \F_p^3\).  Here \(\tau_x \left( \xi^\perp \right)\) is translation of \(\xi^\perp\) by \(x\).
\end{theorem}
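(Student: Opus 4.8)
The plan is to reduce the vanishing of $\hat{E}(\xi)$ to a statement about a vanishing sum of $p$-th roots of unity with nonnegative integer coefficients, and then to exploit the irreducibility of the $p$-th cyclotomic polynomial. First I would decompose $\F_p^3$ along the level sets of the linear functional $x \mapsto x \cdot \xi$: since $\xi \neq 0$ this functional is surjective with every fiber of size $p^2$, and the $p$ fibers $H_t := \{x \in \F_p^3 : x \cdot \xi = t\}$, $t \in \F_p$, are exactly the $p$ translates of $\xi^\perp$. Writing $n_t := |E \cap H_t|$ and $\omega := e^{-2\pi i / p}$, the definition of the Fourier transform gives
\[
\hat{E}(\xi) = p^{-3} \sum_{x \in E} e^{-\frac{2\pi i}{p} x \cdot \xi} = p^{-3} \sum_{t \in \F_p} n_t\, \omega^{t},
\]
so that $\hat{E}(\xi) = 0$ precisely when $\sum_{t \in \F_p} n_t\, \omega^{t} = 0$, while equidistribution of $E$ on the planes orthogonal to $\xi$ is precisely the assertion that $t \mapsto n_t$ is constant.

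The easy direction is immediate: if $n_t \equiv c$, then $\sum_{t} n_t\, \omega^{t} = c \sum_{t=0}^{p-1} \omega^{t} = 0$ because $\omega \neq 1$ is a $p$-th root of unity. For the converse, suppose $\sum_{t} n_t\, \omega^{t} = 0$ and form the polynomial $P(X) := \sum_{t=0}^{p-1} n_t X^{t} \in \mathbb{Z}[X]$, which has degree at most $p - 1$ and vanishes at $\omega$. Since the minimal polynomial of $\omega$ over $\mathbb{Q}$ is the $p$-th cyclotomic polynomial $\Phi_p(X) = 1 + X + \cdots + X^{p-1}$, monic and irreducible of degree $p - 1$, Gauss's lemma yields $\Phi_p \mid P$ in $\mathbb{Z}[X]$; comparing degrees forces $P = c\, \Phi_p$ for some $c \in \mathbb{Z}$, i.e.\ $n_t = c$ for all $t$. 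This is exactly equidistribution, completing the proof.

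I do not anticipate a real obstacle, and the argument is in fact dimension-free. The two points meriting a line of care are the bookkeeping that the fibers $H_t$ genuinely partition $\F_p^3$ into the $p$ translates of $\xi^\perp$ (a consequence of $\xi \neq 0$), and the invocation of the irreducibility of $\Phi_p$, which is the structural input that prevents a nonnegative integer combination of $p$-th roots of unity from vanishing in any way other than being constant across all $p$ of them.
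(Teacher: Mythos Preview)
Your argument is correct and is the standard proof of this fact: slice $E$ along the fibers of $x \mapsto x \cdot \xi$, rewrite $\hat{E}(\xi)$ as a $\mathbb{Z}$-linear combination of $p$-th roots of unity, and invoke the irreducibility of $\Phi_p$ to force the coefficients to be equal. The bookkeeping about the fibers being exactly the translates of $\xi^\perp$ and the degree comparison with $\Phi_p$ are both handled correctly.

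There is nothing to compare against in the paper itself, however: Theorem~\ref{equidist} is stated with a citation to \cite{HIPSVPack} and no proof is given here. Your writeup is essentially the argument one finds in that reference (and is indeed dimension-free, as you note).
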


The next theorem is a characterization of the size of spectral sets, due to Iosevich, the second listed author, and Pakianathan,  also holds in all dimensions.  It is an important ingredient for their proof of the Fuglede Conjecture in \(\F_p^2\).

\begin{theorem} \label{IMPdimfree} \cite{IMPFug} 
Let 
  \(E \subset \F_p^d\) and $p$ be an odd prime. Then \(A\) is a spectrum of \(E\) if and only if \(\left| A \right| = \left| E \right|\) and \(\hat{E} \left(a_1-a_2 \right) = 0 \) for any two distinct \(a_1, a_2 \in A\).  If \(E\) is spectral and \(\left| E \right| > p^{d-1}\), then \(E=A=\F_p^d\).  If \(E\) is spectral,  then \(\left| E \right| = 1\) or \(\left| E \right| = mp\)  where  \(1\leq m\leq p^{d-2}\).
\end{theorem}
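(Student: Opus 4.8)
The plan is to handle the three assertions in turn, the middle one carrying essentially all the content.

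\medskip

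For the characterization of spectra I would just compute the relevant inner products: for $a,b\in\F_p^d$,
\[
\langle \chi_a,\chi_b\rangle_{L^2(E)}=\sum_{x\in E}e^{\frac{2\pi i}{p}x\cdot(a-b)}=p^{d}\,\hat E(b-a),
\]
so $\{\chi_a:a\in A\}$ is an orthogonal system in $L^2(E)$ exactly when $\hat E(a_1-a_2)=0$ for all distinct $a_1,a_2\in A$ (the conditions for $\pm(a_1-a_2)$ coincide since $E$ is real-valued). Each $\chi_a|_E$ has squared norm $|E|\neq 0$, so any such orthogonal system is linearly independent; since $\dim L^2(E)=|E|$, it is a basis precisely when $|A|=|E|$. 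This proves the first equivalence, and in particular shows that a spectrum $A$ satisfies $\hat E(a_1-a_2)=0$ for all distinct $a_1,a_2\in A$.

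\medskip

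The key structural point I would isolate next is that, by Theorem~\ref{equidist}, whether $\hat E(\xi)=0$ (for $\xi\neq 0$) depends only on the \emph{direction} of $\xi$: the condition is that $E$ be equidistributed on the $p$ translates of $\xi^\perp$, and $\xi^\perp=(t\xi)^\perp$ for every $t\neq 0$. Hence $\{\xi\neq 0:\hat E(\xi)\neq 0\}$ is a union of punctured lines through the origin. Now suppose $E$ is spectral with spectrum $A$ and $|E|>p^{d-1}$. If $\hat E$ were nonzero at some $\xi\neq 0$, then it would be nonzero on the whole punctured line $\F_p\xi\setminus\{0\}$, so no difference of two distinct points of $A$ could lie on that line; thus each of the $p^{d-1}$ cosets of $\F_p\xi$ contains at most one point of $A$, forcing $|A|\le p^{d-1}$, a contradiction with $|A|=|E|$. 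Therefore $\hat E$ is supported only at $0$, so $E(x)\equiv p^{-d}|E|$ is constant; being $\{0,1\}$-valued it must be $1$ everywhere, i.e.\ $E=\F_p^d$, and then $|A|=|E|=p^d$ forces $A=\F_p^d$.

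\medskip

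For the size constraint, if $|E|>1$ then $|A|=|E|\ge 2$, so there are distinct $a_1,a_2\in A$; since $\hat E(a_1-a_2)=0$ with $a_1-a_2\neq 0$, Theorem~\ref{equidist} gives that $E$ meets each of the $p$ hyperplanes orthogonal to $a_1-a_2$ in exactly $|E|/p$ points, so $p\mid|E|$; writing $|E|=mp$ and invoking the previous paragraph, the case $E\neq\F_p^d$ forces $|E|\le p^{d-1}$, i.e.\ $m\le p^{d-2}$ (the remaining case $E=\F_p^d$ accounts for $m=p^{d-1}$). I do not expect a genuine obstacle anywhere here: once Theorem~\ref{equidist} is available, the only real idea is that vanishing of $\hat E$ is a property of directions, which converts a statement about the Fourier support into a pigeonhole over the cosets of a single line; the first step is pure bookkeeping.
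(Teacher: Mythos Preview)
The paper does not supply its own proof of this theorem: it is quoted from \cite{IMPFug} and stated without argument, so there is nothing in the paper to compare your proposal against. That said, your proof is correct and is essentially the standard one. The inner-product computation is right (note $\hat E(b-a)=\overline{\hat E(a-b)}$ since $E$ is real-valued, so the vanishing condition is symmetric as you observe), and the key idea---that by Theorem~\ref{equidist} the zero set of $\hat E$ on $\F_p^d\setminus\{0\}$ is a union of punctured lines through the origin, turning the spectrum condition into a pigeonhole over cosets of a single line---is exactly what drives the $|E|>p^{d-1}$ case. Your handling of the size dichotomy is also fine, including the parenthetical remark that $E=\F_p^d$ corresponds to $m=p^{d-1}$; the theorem as stated in the paper silently excludes this case, but you have correctly accounted for it. One minor note: Theorem~\ref{equidist} is stated in the paper only for $\F_p^3$, but the paper explicitly says it holds in all dimensions, so your use of it in $\F_p^d$ is legitimate.
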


The consequences of Theorems \ref{equidist} and \ref{IMPdimfree} in three dimensions are   summarized by the following result from \cite{IosevichCrew}.  In fact, this equivalence is one of several in the paper, but will be enough for present purposes.

\begin{theorem}\cite{IosevichCrew} \label{prop:easyred} Let $p$ be an odd prime.  Then the  following are equivalent: 
\begin{enumerate}
	\item The Fuglede Conjecture holds in \(\F_p^3\).
	\item There exist no spectral sets \(E\) satisfying \(\left| E \right| = mp\) with \(2 \leq m \leq p-1\).
\end{enumerate}
\end{theorem}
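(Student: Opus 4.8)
The plan is to prove the two implications separately, exploiting the fact that tiling is far more rigid than spectrality at the level of cardinalities: summing the identity $\sum_{a\in A}E(x-a)=1$ over $x\in\F_p^3$ gives $|E|\,|A|=p^3$, so every tile of $\F_p^3$ has $|E|\in\{1,p,p^2,p^3\}$.

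For $(1)\Rightarrow(2)$: suppose the conjecture holds in $\F_p^3$ and that $E$ is spectral with $|E|=mp$, $2\le m\le p-1$. Then $E$ tiles, so $|E|$ divides $p^3$; but $\gcd(m,p)=1$ and $m>1$, so $mp$ does not divide $p^3$ — a contradiction. Hence no such $E$ exists. For $(2)\Rightarrow(1)$: by Theorem~\ref{IMPdimfree} a spectral set has $|E|=1$, or $|E|=mp$ with $1\le m\le p$, or (when $|E|>p^2$, and then) $E=\F_p^3$; granting $(2)$, the only possibilities are $|E|\in\{1,p,p^2,p^3\}$ — exactly the sizes available to tiles. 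So it is enough to show, for each of these four sizes, that $E$ is spectral if and only if $E$ tiles; the sizes $1$ and $p^3$ are trivial. For $|E|=p$: if $A$ is a spectrum of $E$, choose distinct $a,a'\in A$; since $\hat E(a-a')=0$, Theorem~\ref{equidist} says $E$ meets each of the $p$ planes orthogonal to $a-a'$ in exactly one point, so $E\oplus(a-a')^\perp=\F_p^3$ and $E$ tiles. Conversely, if $E$ tiles with set $A$ then $\hat E\hat A$ vanishes off the origin, and $\hat E$ cannot be non-vanishing everywhere off the origin (otherwise $\hat A$ would be supported at $0$, making the indicator of $A$ constant, which is impossible); picking $v\neq0$ with $\hat E(v)=0$, the set $E$ has one point per plane orthogonal to $v$, hence $\hat E$ vanishes on all of $\langle v\rangle\setminus\{0\}$, and $\langle v\rangle$ is a spectrum of $E$ by Theorem~\ref{IMPdimfree}.

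The size $|E|=p^2$ is handled dually, with lines and planes interchanged. If $E$ tiles with set $A$ (so $|A|=p$), then for distinct $a,a'\in A$ and any $\xi\in(a-a')^\perp\setminus\{0\}$ the points $a,a'$ lie in one plane orthogonal to $\xi$, so $A$ fails to be equidistributed there and $\hat A(\xi)\neq0$, forcing $\hat E(\xi)=0$; thus $\hat E$ vanishes on the $2$-dimensional subspace $(a-a')^\perp$, which, having the correct cardinality, is a spectrum of $E$ by Theorem~\ref{IMPdimfree}. For the reverse direction, observe that if $\hat E$ vanishes on a $2$-dimensional subspace $W$ (off the origin) then $E$ has constant — hence unit — intersection with the cosets of the line $W^\perp$ (this is the version of Theorem~\ref{equidist} obtained by passing to the quotient $\F_p^3/W^\perp$, on whose dual $W$ is the full character group), so $E\oplus W^\perp=\F_p^3$ and $E$ tiles. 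Since $\hat E$ vanishes on $A-A\setminus\{0\}$ for any spectrum $A$ of size $p^2$, it therefore suffices to prove that the difference set of any $p^2$-element subset of $\F_p^3$ contains a $2$-dimensional subspace.

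This last claim — equivalently, that $\F_p^3\setminus(A-A)$ cannot meet every plane through the origin — is the crux, and is where I expect the real work to lie. I would attack it by feeding the distributional information of Theorem~\ref{equidist} (invoked for both $E$ and its spectrum $A$, and using that spectral pairs are symmetric in $E$ and $A$) into a Plancherel/uncertainty estimate that bounds the support of $\hat E$ from below, in the spirit of the proof of Theorem~\ref{IMPdimfree}. This is precisely the $|E|=p^{d-1}$ endpoint of the conjecture in dimension $d=3$, the one-dimension-higher analogue of the decisive case of the $\F_p^2$ theorem of \cite{IMPFug}, and I expect it to be the most delicate ingredient; the remaining implications for sizes $1$, $p$, $p^2$ (tiling direction), and $p^3$ are, as sketched above, routine consequences of Theorems~\ref{equidist} and~\ref{IMPdimfree}.
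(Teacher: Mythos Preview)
The paper does not actually prove this theorem; it is quoted from \cite{IosevichCrew} and invoked as a black box. So there is no in-paper proof to compare against, and your proposal must stand on its own.

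Your reduction and the handling of the easy cardinalities are correct: the implication $(1)\Rightarrow(2)$ is immediate from $|E|\,|A|=p^3$, and for sizes $1$, $p$, $p^3$, as well as the ``tile $\Rightarrow$ spectral'' direction for size $p^2$, your arguments via Theorems~\ref{equidist} and~\ref{IMPdimfree} are clean and complete.

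The genuine gap is exactly where you flag it: the ``spectral $\Rightarrow$ tile'' direction for $|E|=p^2$. You correctly reduce to showing that $\hat E$ vanishes on some $2$-dimensional subspace through the origin, but you then propose to obtain this from the stronger, purely combinatorial statement that \emph{every} $p^2$-element subset $A\subset\F_p^3$ has a difference set containing a $2$-dimensional subspace. This formulation discards the hypothesis that $A$ is a spectrum, and it is not at all clear that the unrestricted statement is true; at any rate you do not prove it, and the Plancherel/uncertainty sketch you offer is not an argument. Note also that what you actually need is weaker: it suffices that the directions determined by $A$ contain all $p+1$ directions of some plane (since the zero set of $\hat E$ is a union of punctured lines through the origin, and it contains every direction determined by $A$). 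One can check, for instance, that the set $N$ of directions \emph{not} determined by $A$ cannot contain all directions of a single plane (this forces $|A|\le p$), so if your approach is to work it must exploit finer information --- and here the spectrality of $A$ (with spectrum $E$, by Theorem~\ref{them:symmetry}) is the natural extra input. As written, this case is a plan, not a proof; it is precisely the substantive content of the cited result, and you should either supply the argument or cite \cite{IosevichCrew} for it.
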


According to Theorem \ref{prop:easyred}, in order to prove the conjecture in \(\F_5^3\), it suffices to verify that no set of size \(10, 15\) or \(20\) is spectral.  Similarly, for \(\F_7^3\), it is enough to show that no set of size \(14, 21, 28, 35\) or \(42\) is spectral.  This is exactly what we will do.  

We also need the following symmetry result whose proof can be found in \cite{IosevichCrew}.

\begin{theorem}  \label{them:symmetry} 
Suppose \(E, A \subset \F_p^d\) and that \(\left( E, A \right)\) is a spectral pair.  Then \(\left( A, E \right)\) is a spectral pair.
\end{theorem}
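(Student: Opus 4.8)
The plan is to derive the symmetry directly from the Fourier-analytic characterization of spectra in Theorem \ref{IMPdimfree}, using only one elementary fact of linear algebra: a \emph{square} matrix whose rows are pairwise orthogonal and of a common norm also has pairwise orthogonal columns of that norm. Suppose \((E,A)\) is a spectral pair. By Theorem \ref{IMPdimfree} this means \(|E|=|A|=:N\) and \(\hat{E}(a_1-a_2)=0\) for every pair of distinct \(a_1,a_2\in A\). It therefore suffices to prove that \(\hat{A}(e_1-e_2)=0\) for every pair of distinct \(e_1,e_2\in E\); once this is known, the equality \(|A|=|E|\) together with Theorem \ref{IMPdimfree} (applied with the roles of \(E\) and \(A\) interchanged) shows that \(E\) is a spectrum of \(A\), i.e.\ that \((A,E)\) is a spectral pair.

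To carry this out, introduce the \(N\times N\) matrix \(M\) with rows indexed by \(A\), columns indexed by \(E\), and entries
\[
M_{a,e}=e^{\frac{2\pi i}{p}\, a\cdot e},\qquad a\in A,\ e\in E.
\]
A direct computation gives \((MM^{*})_{a,a'}=\sum_{e\in E}e^{\frac{2\pi i}{p}(a-a')\cdot e}=p^{d}\,\hat{E}(a'-a)\), which equals \(N\) when \(a=a'\) and, by the spectral-pair hypothesis, equals \(0\) otherwise; hence \(MM^{*}=N\,I\). Since \(M\) is square, this forces \(M^{*}M=N\,I\) as well. Reading off the \((e,e')\) entry, \((M^{*}M)_{e,e'}=\sum_{a\in A}\overline{M_{a,e}}\,M_{a,e'}=\sum_{a\in A}e^{\frac{2\pi i}{p}\,a\cdot(e'-e)}=p^{d}\,\hat{A}(e-e')\), so \(\hat{A}(e-e')=0\) whenever \(e\neq e'\). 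This is exactly what was needed, so \((A,E)\) is a spectral pair.

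I do not expect any real obstacle here beyond keeping the Fourier normalization consistent; the one substantive point is the implication \(MM^{*}=N\,I\Rightarrow M^{*}M=N\,I\), which is valid precisely because \(M\) is square, a feature guaranteed by \(|E|=|A|\). If one wishes to avoid quoting Theorem \ref{IMPdimfree} altogether, the same matrix \(M\) works more directly: the hypothesis that \((E,A)\) is a spectral pair says exactly that the rows \(\chi_a|_E=(M_{a,e})_{e\in E}\) form an orthogonal basis of the \(|E|\)-dimensional space \(L^2(E)\), which already yields \(|A|=|E|=N\) and \(MM^{*}=N\,I\); then \(M^{*}M=N\,I\) says the columns \(\chi_e|_A=(M_{a,e})_{a\in A}\) form an orthogonal family of \(N=|A|\) nonzero vectors in \(L^2(A)\), hence an orthogonal basis, so \((A,E)\) is a spectral pair.
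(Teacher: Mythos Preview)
Your argument is correct. The paper itself does not prove this theorem at all---it simply records the statement and cites \cite{IosevichCrew} for the proof---so there is no ``paper's own proof'' to compare against; the matrix argument you give (form the \(N\times N\) character matrix, use \(MM^{*}=NI\Rightarrow M^{*}M=NI\) for square \(M\)) is exactly the standard proof and is what one finds in the cited reference.

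One minor remark: your first route invokes Theorem~\ref{IMPdimfree}, which in the paper is stated only for odd primes, whereas Theorem~\ref{them:symmetry} carries no such restriction. Your second, self-contained route (rows of \(M\) are an orthogonal basis of \(L^2(E)\) \(\Rightarrow\) columns are an orthogonal basis of \(L^2(A)\)) avoids this and is the cleaner version to present.
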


\section{Auxiliary Lemmas in \(\F_p^3\)} \label{sectionconcentrate}
The first lemma and subsequent corollaries say that any non-tiling spectral set can not determine too many directions of any plane.  This means, in particular, that no such set can be too concentrated on any plane.  The second lemma proves analogous staments for lines.

\begin{lemma} \label{lemma:planedist}
Suppose \(E \subset \F_p^3\), \( \left| E \right| \leq p^2 \), and that there is some plane \(P_0\) such that \(\hat{E}\) vanishes on \(P_0 \setminus \left\{ 0 \right\}\). Then \( \left| E \right| = p^2 \).
\end{lemma}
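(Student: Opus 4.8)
The plan is to turn the hypothesis into the single statement that $E$ meets every translate of the line $\ell := P_0^{\perp}$ in exactly $|E|/p^2$ points, and then to finish by an integrality argument. Before doing anything I would record the routine subspace bookkeeping: the notation $P_0\setminus\{0\}$ forces $P_0$ to be a two-dimensional \emph{subspace}, so $\ell := P_0^{\perp}$ is a line and $\ell^{\perp}=P_0$; moreover, for each $\xi\in P_0\setminus\{0\}$ the plane $\xi^{\perp}$ contains $\ell$, and as $\xi$ ranges over the $p+1$ directions in $P_0$ the planes $\xi^{\perp}$ range over all $p+1$ planes through the origin that contain $\ell$. By Theorem \ref{equidist}, the hypothesis is precisely the assertion that $E$ is equidistributed on the $p$ translates of each of those $p+1$ planes.

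Next I would package all of this into one function. Let $g$ be the integer-valued function on \(\F_p^3\) defined by $g(x):=|E\cap(x+\ell)|=\sum_{y\in\ell}E(x-y)$ (the convolution of the indicator of $E$ with that of $\ell$). A one-line Fourier computation, using that $\sum_{y\in\ell}e^{-\frac{2\pi i}{p}y\cdot\xi}$ equals $p$ when $\xi\in\ell^{\perp}=P_0$ and $0$ otherwise, gives $\hat g(\xi)=p\,\hat E(\xi)$ for $\xi\in P_0$ and $\hat g(\xi)=0$ for $\xi\notin P_0$. Since $\hat E$ vanishes on $P_0\setminus\{0\}$ by hypothesis, $\hat g$ is supported at the origin, so by the reconstruction formula $g$ is the constant function $\hat g(0)=p\,\hat E(0)=|E|/p^2$. (Alternatively one can avoid Fourier analysis at this step and derive that $g$ is constant directly from the equidistribution of $E$ on the $p+1$ families of planes through $\ell$, via a counting identity among the $p+1$ planes containing a fixed coset of $\ell$.)

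Finally, evaluating at $x=0$ gives $|E\cap\ell|=|E|/p^2$, a nonnegative integer; since $E\neq\emptyset$ this is at least $1$, and together with the hypothesis $|E|\le p^2$ this forces $|E|=p^2$, as claimed.

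I do not expect a serious obstacle here: the argument is short, and the only points requiring a little care are the subspace identifications (in particular that the planes $\xi^{\perp}$ for $\xi\in P_0\setminus\{0\}$ are exactly the planes through the origin containing $\ell$, so that the hypothesis controls $g$ on all cosets of $\ell$) and the tacit exclusion of the empty set, for which the statement as literally written would fail.
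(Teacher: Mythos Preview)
Your proof is correct. Both your argument and the paper's establish that every translate of $\ell = P_0^{\perp}$ meets $E$ in the same number of points and then finish by integrality; they differ only in how they reach that intermediate fact. The paper argues by elementary counting: fixing a coset $\ell'$ of $\ell$, it observes that each of the $p+1$ planes $Q_i$ containing $\ell'$ satisfies $Q_i^{\perp}\subset P_0$, so $|E\cap Q_i|=m:=|E|/p$ for every $i$; writing $a=|E\cap\ell'|$ and $b=m-a$, the identity $a+(p+1)b=|E|=mp$ forces $p\mid m$. Your primary route instead computes the transform of the convolution $g=E*1_{\ell}$ and reads off that $\hat g$ is supported at the origin. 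Your parenthetical alternative (``derive that $g$ is constant directly from the equidistribution of $E$ on the $p+1$ families of planes through $\ell$, via a counting identity among the $p+1$ planes containing a fixed coset of $\ell$'') is exactly the paper's argument. Both are short; yours has the advantage of packaging all $p+1$ equidistribution conditions into one convolution identity, while the paper's stays a bit closer to first principles. Your remark about the empty set is also correct, and the paper's proof implicitly makes the same assumption when it writes $|E|=mp$ with $1\le m\le p$.
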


\begin{proof}
Since \(\hat{E}\) has a zero, by Theorem \ref{equidist}, \(\left| E \right| = mp\), \(1 \leq m \leq p \).
Define \(d_0 := P_0^\perp\) and let \(\ell\) be an arbitrary translate of \(d_0\).  For \(1 \leq i \leq p+1\), let \(Q_i\) be the distinct planes containing \(\ell\).  Then, since \(Q_i^\perp \subset P_0\), \(\hat{E}\) vanishes on the nonzero points of \(Q_i^\perp\) for every \(i\),  we must have \(\left| E \cap Q_i \right| =m \). 

Notice that
\[
\F_p^3= \bigcup_{i=1}^{p+1}Q_i
\]
and for \(i \neq j\), \(Q_i \cap Q_j = \ell\).
Define \(a := \left| E \cap \ell \right|\) and \(b := \left| E \cap Q_i \right|-a \).  Observe that \(b\) is independent of \(i\).  Then
\begin{align*}
a+b&=m\\
a+\left(p+1 \right)b&=mp.
\end{align*}
\\
This implies \(m \equiv 0 \mod p\), which forces \(m=p\) and \( \left| E \right|=p^2\). This completes the proof. 
\end{proof}

\begin{corollary} \label{lemma:planedirect}
Suppose \(E, A \subset \F_p^3\) and that \(\left( E, A \right)\) is a spectral pair with \( \left| E \right| = \left| A \right| = mp \), \(2 \leq m \leq p-1\).  Then \(E\) and \(A\) each determine at most \(p\) directions of any plane.
\end{corollary}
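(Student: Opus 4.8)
The plan is to argue by contradiction, the whole point being to arrange the hypotheses of Lemma \ref{lemma:planedist} for the set \(A\). First, by Theorem \ref{them:symmetry} the pair \((A,E)\) is again a spectral pair, and it has the same common size \(mp\) with \(2\le m\le p-1\); so it suffices to prove the statement for \(E\), the argument for \(A\) being identical with the roles of \(E\) and \(A\) interchanged.

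Suppose, for contradiction, that \(E\) determines at least \(p+1\) directions of some plane \(P\). Since a plane has exactly \(p+1\) directions, \(E\) in fact determines \emph{every} direction of \(P\). Let \(P_0\) be the two-dimensional subspace of which \(P\) is a translate; \(P\) and \(P_0\) have the same set of directions, and these \(p+1\) directions are precisely the one-dimensional subspaces contained in \(P_0\), which partition \(P_0\setminus\{0\}\) into \(p+1\) classes of \(p-1\) nonzero vectors each.

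The key step is to pass from this to a statement about \(\hat A\). Because \((A,E)\) is a spectral pair, Theorem \ref{IMPdimfree} (applied with \(A\) as the set and \(E\) as its spectrum) gives \(\hat A(e_1-e_2)=0\) for all distinct \(e_1,e_2\in E\). Fix a direction \(d\subset P_0\) and choose distinct \(e_1,e_2\in E\) with \(v:=e_1-e_2\in d\), \(v\neq 0\). By Theorem \ref{equidist}, \(\hat A(v)=0\) means exactly that \(A\) is equidistributed on the translates of \(v^\perp=d^\perp\); this condition does not see \(v\) itself, only the direction \(d\), so \(\hat A\) vanishes at \emph{every} nonzero point of \(d\). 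Letting \(d\) range over all \(p+1\) directions of \(P_0\), and using that they cover \(P_0\setminus\{0\}\), we conclude that \(\hat A\) vanishes on \(P_0\setminus\{0\}\).

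Finally, apply Lemma \ref{lemma:planedist} to the set \(A\) with the plane \(P_0\): since \(|A|=mp\le(p-1)p<p^2\), the hypothesis \(|A|\le p^2\) is met, and the lemma forces \(|A|=p^2\), contradicting \(|A|=mp\) with \(m\le p-1\). Hence \(E\) determines at most \(p\) directions of any plane, and by the symmetry noted at the outset the same holds for \(A\). The only nonroutine point is the passage in the previous paragraph from ``\(E\) determines all directions of \(P\)'' to ``\(\hat A\) vanishes on \(P_0\setminus\{0\}\)'', which rests on the direction-invariance of the equidistribution condition in Theorem \ref{equidist}; the rest is bookkeeping with the arithmetic \(mp<p^2\).
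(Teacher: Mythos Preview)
Your proof is correct and follows essentially the same approach as the paper: assume one of the sets determines all \(p+1\) directions of a plane, deduce via Theorem~\ref{IMPdimfree} and Theorem~\ref{equidist} that the Fourier transform of the partner set vanishes on a plane through the origin minus \(\{0\}\), and then invoke Lemma~\ref{lemma:planedist} for the contradiction, with Theorem~\ref{them:symmetry} handling the other set. The only difference is that you start with \(E\) and apply Lemma~\ref{lemma:planedist} to \(A\) whereas the paper does the reverse, and you spell out in more detail why vanishing at one nonzero point of a direction forces vanishing on the whole punctured line.
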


\begin{proof}
Assume that \(A\) determines all \(p+1\) directions of some plane. That means that \(\hat{E}\) vanishes on every point of that plane except the origin.  By Lemma \ref{lemma:planedist}, \(\left| E \right| = p^2\), which is a contradiction.  By the symmetry property in Theorem \ref{them:symmetry}, \(E\) determining every direction of some plane also delivers a contradiction. This completes the proof that $A$ and $E$ each determine at most \(p\) directions of any plane.
\end{proof}

The following two results indicate a bound on a spectral set's the subspace concentration.

\begin{corollary} \label{lemma:planeconcentrate}
Suppose \(E, A \subset \F_p^3\) and that \(\left( E, A \right)\) is a spectral pair with \( \left| E \right| = \left| A \right| = mp \), \(2 \leq m \leq p-1\).  Then 
$$\sup_P \left| E \cap P \right| \leq p, \quad \text{ and }  \quad \sup_P \left| A \cap P \right| \leq p . $$ 
  Here, the supremum is taken over all possible  translates of all two dimensional subspaces.
\end{corollary}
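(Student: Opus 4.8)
The plan is to deduce this directly from Corollary \ref{lemma:planedirect} by a pigeonhole count on lines inside a plane. First I would fix an arbitrary plane \(P\); since the quantities \(|E \cap P|\) and "the directions \(E\) determines inside \(P\)" depend only on the affine structure, I may translate so that \(P\) is a two-dimensional subspace of \(\F_p^3\). Then \(P\) contains exactly \(p+1\) directions, and for each such direction \(d\) the \(p\) cosets of \(d\) in \(P\) form \(p\) parallel lines that partition \(P\).

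Next I would prove the contrapositive of the desired bound for \(E\). Suppose \(|E \cap P| \geq p+1\). Fix any one of the \(p+1\) directions \(d \subset P\). The \(p\) lines in \(P\) parallel to \(d\) partition \(E \cap P\), and since \(|E \cap P| \geq p+1 > p\), the pigeonhole principle produces two distinct points \(e_1, e_2 \in E \cap P\) lying on a common such line, so \(e_1 - e_2 \in d\); thus \(E\) determines \(d\). As \(d\) ranged over all \(p+1\) directions of \(P\), we conclude that \(E\) determines all \(p+1\) directions of \(P\), contradicting Corollary \ref{lemma:planedirect}. Hence \(|E \cap P| \leq p\), and taking the supremum over all planes \(P\) gives \(\sup_P |E \cap P| \leq p\). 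The same argument applies verbatim to \(A\) — Corollary \ref{lemma:planedirect} already asserts the "at most \(p\) directions" bound for both \(E\) and \(A\) — yielding \(\sup_P |A \cap P| \leq p\).

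There is essentially no substantive obstacle here: the only thing needing a little care is the elementary bookkeeping that a two-dimensional subspace of \(\F_p^3\) splits into exactly \(p\) parallel lines in each of its \(p+1\) directions, so that having more than \(p\) points inside a plane forces a repeated line — and hence a determined direction — in \emph{every} direction of that plane. Once this is set up, the corollary is immediate from Corollary \ref{lemma:planedirect}.
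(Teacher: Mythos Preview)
Your proof is correct and follows essentially the same approach as the paper: assume some plane contains at least \(p+1\) points of \(E\), use pigeonhole on the \(p\) parallel lines in each direction to show \(E\) determines all \(p+1\) directions of that plane, and contradict Corollary~\ref{lemma:planedirect}; then invoke symmetry for \(A\). The paper's proof is simply a terser version of exactly this argument.
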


\begin{proof} Assume that 
  there is a plane \(P_0\) with \( \left| E \cap P_0 \right| \geq p+1\). Then by pigeonholing, \(E\) determines every direction of some two dimensional subspace, which contradicts the previous corollary.  By symmetry, the same also holds for \(A\).
\end{proof}

\begin{lemma} \label{lemma:lineconcen}
Suppose \(E, A \subset \F_p^3\) and that \(\left( E, A \right)\) is a spectral pair with \( \left| E \right| = \left| A \right| = mp \), \(2 \leq m \leq p-1\).  Then
$$ 
\sup_{\ell} \left| E \cap \ell \right| \leq \min \left\{ m, p-m \right\}
 \quad \text{and} 
 \quad 
\sup_{\ell} \left| A \cap \ell \right| \leq \min \left\{ m, p-m \right\}.
$$
 Here, the supremum is taken over all possible translates of all one dimensional subspaces.
\end{lemma}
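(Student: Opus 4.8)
The plan is to prove the bound for $E$ only; the bound for $A$ then follows by applying the same result to the pair $(A,E)$, which is again a spectral pair by Theorem~\ref{them:symmetry}. Since a translate of a spectral set is spectral with the same spectrum and $|E\cap\ell|$ is translation invariant, I would first reduce to the case in which the line $\ell$ passes through the origin, so that $\ell=d$ is a one‑dimensional subspace. Let $Q_1,\dots,Q_{p+1}$ be the $p+1$ two‑dimensional subspaces containing $\ell$. These cover $\F_p^3$ with pairwise intersection exactly $\ell$, and the involution $\delta\mapsto\delta^\perp$ matches the $p+1$ directions $\delta\subseteq d^\perp$ bijectively with these planes, so that $Q_i^\perp$ runs over all directions of $d^\perp$. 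A count of incidences — the $k:=|E\cap\ell|$ points of $E$ on $\ell$ lie in all $p+1$ of the $Q_i$, every other point of $E$ in exactly one — gives the identity
\[
\sum_{i=1}^{p+1}|E\cap Q_i| \;=\; p\,(m+k).
\]

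The heart of the argument is the implication: if $A$ determines the direction $Q_i^\perp$, then $|E\cap Q_i|=m$. Indeed, choosing distinct $a_1,a_2\in A$ with $a_1-a_2\in Q_i^\perp$, Theorem~\ref{IMPdimfree} gives $\hat E(a_1-a_2)=0$, so by Theorem~\ref{equidist} $E$ is equidistributed on the planes orthogonal to $a_1-a_2$; since $a_1-a_2$ spans $Q_i^\perp$, these planes are precisely the $p$ translates of $Q_i=(Q_i^\perp)^\perp$, whence $|E\cap Q_i|=|E|/p=m$. Next I would observe that $A$ must determine at least one direction of $d^\perp$: as $|A|=mp>p$, the pigeonhole principle forces two points of $A$ into a common translate of $d^\perp$ (there are only $p$ such translates), and their difference spans such a direction. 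Since $\ell\subseteq Q_i$ forces $|E\cap Q_i|\ge|E\cap\ell|$, this already yields $k\le m$. For the complementary bound, let $t\ge1$ be the number of indices $i$ for which $A$ determines $Q_i^\perp$; those $t$ planes contribute exactly $m$ each to the identity above, while the remaining $p+1-t$ contribute at most $p$ each by Corollary~\ref{lemma:planeconcentrate}. Hence $p(m+k)\le tm+(p+1-t)p$, so $pk\le t(m-p)+p(p+1-m)\le (m-p)+p(p+1-m)=p^2-m(p-1)$, giving $k\le p-m+\tfrac{m}{p}$; since $0<\tfrac{m}{p}<1$ and $p-m$ is an integer, $k\le p-m$. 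Together these give $|E\cap\ell|\le\min\{m,p-m\}$.

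I expect the only non‑routine step to be recognizing and verifying the implication "$A$ determines $Q_i^\perp$ $\Rightarrow$ $|E\cap Q_i|=m$", which is exactly where Theorems~\ref{IMPdimfree} and~\ref{equidist} are used; once that is in hand the rest is a short incidence count, a pigeonhole remark guaranteeing $t\ge1$, and the trivial rounding of $k\le p-m+m/p$ down to $k\le p-m$ using that $m/p$ is not an integer. A minor point to watch is that this argument uses only the planes $Q_i$ containing $\ell$ (never the planes orthogonal to $d$), so no case distinction according to whether $d$ is isotropic is needed.
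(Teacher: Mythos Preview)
Your argument is correct and follows essentially the same route as the paper: cover $\F_p^3$ by the $p+1$ planes through $\ell$, use the fact that $A$ determines at least one direction of $d^\perp$ to pin one of the plane-intersections to size $m$, bound the remaining ones by $p$ via Corollary~\ref{lemma:planeconcentrate}, and round down. The only cosmetic differences are your preliminary reduction to $\ell$ through the origin and the introduction of the parameter $t$, which you then use only through $t\ge 1$, exactly as the paper (implicitly) does.
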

\begin{proof}
Fix a direction \(d\).  Since \(\left| A \right| > p\), \(A\) determines some direction of \(d^\perp\).  This means any translate of \(d\) is contained in a plane \(P\) with \(\left| E \cap P \right| = m\).  Since \(d\) is arbitrary,  then \(\sup_{\ell} \left| E \cap \ell \right| \leq m\).  By Theorem \ref{them:symmetry} the same holds for \(A\).

Let \(\ell\) be a traslate of \(d\).  Define \(\alpha := \left|E \cap \ell \right|\).  For \(1 \leq i \leq p+1\), let \(P_i\) be the distinct planes containing \( \ell\) and define \( \beta_i := \left|E \cap P_i \right| \). There is some \(i\) with \(\beta_i=m\), 
since \(A\) determines some direction of \(d^\perp\).  After a relabelling we may assume \(\beta_{p+1}=m\).  Observe that  by Corollary \ref{lemma:planeconcentrate} for all \(1 \leq i \leq p\) we have  \(\beta_i \leq p\) and     
\begin{align}\label{eq.1}
mp = \alpha + \left( m- \alpha \right) + \sum_{i=1}^p\left( \beta_i - \alpha \right) \leq m + p\left( p- \alpha \right).
\end{align}
By a rearrangement of the terms in (\ref{eq.1}) we obtain \( p \left(m+ \alpha -p \right) \leq m \).  Since \(p > m\) and everything in sight is an integer, it follows that
\[
m+\alpha \leq p.
\]
By the symmetry property, the same is also true for \(A\).
\end{proof}

As a corollary of the previous lemma, we rule out the spectral sets of size \(p(p-1)\).
\begin{corollary} \label{cor:nobigspectral} There is no spectral of size $p(p-1)$ in $\F_p^3\) . 
\end{corollary}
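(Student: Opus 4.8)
The plan is to derive this immediately from the line–concentration bound in Lemma~\ref{lemma:lineconcen}. Suppose, toward a contradiction, that \(E \subset \F_p^3\) is a spectral set with \(\left| E \right| = p(p-1)\), and let \(A\) be a spectrum of \(E\). By Theorem~\ref{IMPdimfree} we may write \(\left| E \right| = \left| A \right| = mp\), and matching the sizes forces \(m = p-1\); since this puts us in the range \(2 \le m \le p-1\) (exactly the hypothesis of Lemma~\ref{lemma:lineconcen}, valid as soon as \(p \ge 3\)), the lemma applies to the spectral pair \((E,A)\).

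Next I would simply evaluate the bound. Lemma~\ref{lemma:lineconcen} yields
\[
\sup_{\ell} \left| E \cap \ell \right| \le \min\{m, p-m\} = \min\{p-1, 1\} = 1,
\]
so every line in \(\F_p^3\) contains at most one point of \(E\). On the other hand \(\left| E \right| = p(p-1) \ge 2\), so \(E\) contains two distinct points \(e_1 \ne e_2\); the line through them — namely the translate by \(e_1\) of the one-dimensional subspace spanned by \(e_2 - e_1\) — meets \(E\) in at least two points. This contradiction completes the argument, and in particular it disposes of the largest remaining case in each of \(\F_5^3\) (size \(20\)) and \(\F_7^3\) (size \(42\)).

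There is essentially no obstacle here: the corollary is a clean consequence of Lemma~\ref{lemma:lineconcen} once one notices that the relevant value of \(m\) is \(p-1\), sitting at the very edge of the admissible range, which collapses the line bound to \(1\). The only points worth a moment's care are (i) confirming that a spectral set of size \(p(p-1)\) genuinely has \(m = p-1\) with \(2 \le m \le p-1\), so that Lemma~\ref{lemma:lineconcen} is applicable — this requires \(p \ge 3\), consistent with the odd-prime setting of the paper — and (ii) observing that \(\left| E \right| \ge 2\), so that the ``two points determine a line'' step is not vacuous.
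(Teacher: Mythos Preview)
Your proof is correct and follows essentially the same approach as the paper: both apply Lemma~\ref{lemma:lineconcen} with \(m = p-1\) to obtain \(\sup_\ell |E \cap \ell| \le 1\) and then observe that a set of size \(p(p-1) \ge 2\) cannot satisfy this bound. Your write-up is simply more explicit in checking the hypotheses of the lemma.
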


\begin{proof} Let   \(E\subset \F_p^3\) be a spectral of size \(p \left(p-1 \right)\). Then by 
  Lemma \ref{lemma:lineconcen} we must  have    \(\sup_\ell \left| E \cap \ell \right| \leq 1 \). This   implies \(E\) has at most one point, which is  a contradiction.
\end{proof}

The next result is an alternative and short proof for the Fuglede Conjecture in $\F_p^3$ for $p=3$. 

\begin{corollary}
The Fuglede Conjecture holds in \(\F_3^3\).
\end{corollary}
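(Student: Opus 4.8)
The plan is to combine the reduction in Theorem \ref{prop:easyred} with Corollary \ref{cor:nobigspectral}, and observe that for \(p=3\) these two results already exhaust all cases. Concretely, I would first invoke Theorem \ref{prop:easyred}: the Fuglede Conjecture holds in \(\F_3^3\) if and only if there is no spectral set \(E \subset \F_3^3\) with \(\left| E \right| = 3m\) for some \(m\) with \(2 \le m \le p-1 = 2\). The only integer in this range is \(m = 2\), so the sole configuration that must be excluded is a spectral set of size \(3 \cdot 2 = 6 = p(p-1)\).

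The second (and final) step is to note that a spectral set of this size is precisely what Corollary \ref{cor:nobigspectral} forbids: there is no spectral set of size \(p(p-1)\) in \(\F_p^3\), hence none of size \(6\) in \(\F_3^3\). Tracing back through Theorem \ref{prop:easyred}, this means the Fuglede Conjecture holds in \(\F_3^3\).

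There is no real obstacle here; the point is simply that the generic line-concentration estimate of Lemma \ref{lemma:lineconcen}, which underlies Corollary \ref{cor:nobigspectral}, happens to be sharp enough to kill the unique outstanding case when \(p = 3\). (For larger primes the range \(2 \le m \le p-1\) contains more values of \(m\), and the cases \(m \ne p-1\) require the further work carried out in the subsequent sections.) I would therefore present the argument as a two-line deduction, emphasizing that \(p - 1 = 2\) forces \(m = p-1\) and that Corollary \ref{cor:nobigspectral} then applies verbatim.
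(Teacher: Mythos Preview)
Your proposal is correct and follows essentially the same approach as the paper: reduce via Theorem~\ref{prop:easyred} (the paper cites the closely related Theorem~\ref{IMPdimfree}) to the single case \(|E|=6\), then invoke Corollary~\ref{cor:nobigspectral} to rule it out.
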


\begin{proof}
By Theorem \ref{IMPdimfree}, the only way this could fail is if there were a spectral set of size 6 in \(\F_3^3\), and by the Corollary \ref{cor:nobigspectral} this cannot happen. This proves  the claim. 
\end{proof}

\section{A Plancherel Calculation and the case \(p=5\)} \label{sectionplancherel}

In this section we study the structure of spectrums which determine almost as many directions of a plane as possible.  As a corollary of Lemma \ref{slabconcentration}, we prove the Fuglede's conjecture in \(\mathbb{F}_5^3\) in Corollary \ref{tenexclusion}. 

\begin{lemma}\label{LmNotSpectral}
Suppose \(E \subset \F_p^3\) with \(\left| E \right| = kp\), \(2 \leq k \leq p-1\).  If there is a plane \(P\) such that \(E\) is equidistributed on the planes parallel to \(d^\perp\) for at least \(p-1\) directions \(d \subset P\), then \(E\) is contained in the union of \(k\) parallel planes.
\end{lemma}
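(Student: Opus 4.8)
The plan is to move everything to the Fourier side and read off the structure of \(E\) from the support of \(\hat{E}\) on \(P\). By Theorem \ref{equidist}, \(E\) being equidistributed on the planes parallel to \(d^\perp\) is equivalent to \(\hat{E}\) vanishing on \(d \setminus \{0\}\). Hence the hypothesis says precisely that \(\hat{E}\) vanishes on the nonzero points of at least \(p-1\) of the \(p+1\) directions lying in \(P\); equivalently, the restriction \(\hat{E}|_P\) is supported on at most two lines through the origin, say \(d_1\) and \(d_2\) (possibly equal). I first record that \(\hat{E}\) cannot vanish on all of \(P \setminus \{0\}\): otherwise Lemma \ref{lemma:planedist}, which applies since \(|E| = kp \le p^2\), would force \(|E| = p^2\), contradicting \(k \le p-1\). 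So at least one of \(d_1, d_2\) genuinely carries Fourier mass.

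Next I would translate this support condition into a statement about the fibers of \(E\) over the direction \(P^\perp\). Choose coordinates so that \(P = \{(u,w,0)\}\) and \(P^\perp = \{(0,0,z)\}\), and set \(F(u,w) := |E \cap \{(u,w,z) : z \in \F_p\}|\), a nonnegative integer-valued function on \(\F_p^2\) with \(\sum_{u,w} F(u,w) = |E| = kp\). Since the character attached to \(\xi \in P\) ignores the \(z\)-coordinate, one has \(\hat{E}(\xi) = p^{-1}\hat{F}(\xi)\) for \(\xi \in P\), where \(\hat{F}\) is the two-dimensional transform; thus \(\hat{F}\) is supported on \(d_1 \cup d_2\). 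A function on \(\F_p^2\) whose spectrum lies on two lines through the origin splits, after the linear change of variables dual to \(d_1, d_2\), as \(F(u,w) = a(u) + b(w)\). Here \(a,b\) inherit nonnegativity and integrality from \(F\), and after a harmless shift I may take \(\min a = \min b = 0\) (the second minimum being forced to vanish because \(\sum a + \sum b = k \le p-1\) rules out \(\min b \ge 1\)), with \(\sum_u a(u) + \sum_w b(w) = k\).

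If \(\hat{E}|_P\) is supported on a single line, i.e.\ one of \(a,b\) is constant, the lemma follows at once: \(F\) then depends on only one coordinate, and being nonnegative with total \(kp\) it is positive on at most \(k\) values of that coordinate. Each such value cuts out one plane of \(\F_p^3\) (the preimage under the corresponding coordinate functional, a plane containing \(P^\perp\)), and these at most \(k\) planes are mutually parallel; padding the family if necessary, \(E\) lies in the union of \(k\) parallel planes.

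The main obstacle is the genuinely two-line case, where both \(a\) and \(b\) are non-constant. From the fiber data alone one only obtains that \(E\) lies in at most \(k\) planes split between \emph{two} distinct parallel families—the lifts of the lines \(\{a>0\}\) and \(\{b>0\}\)—which is weaker than a single parallel pencil; indeed the cross-shaped \(F\) with \(a = \mathbf{1}_{\{0\}}\), \(b = \mathbf{1}_{\{0\}}\) shows that \(F\) by itself does not determine such a pencil. I therefore expect the crux to be ruling this case out, and this step cannot be settled from the fiber counts \(F\) alone: distinct sets \(E\) with the same \(F\) behave differently, so one must use information about \(E\) beyond its fibers. The plan is to show that a genuine two-line (cross-type) decomposition forces \(E\) to over- or under-concentrate on some line, contradicting the available concentration estimates, so that one of \(a,b\) must in fact be constant and we fall back to the one-line case; this is exactly where the restriction \(2 \le k \le p-1\) and the finer geometry of \(E\) have to be brought to bear.
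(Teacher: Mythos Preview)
Your setup is correct and your reduction to the ``two-line'' case is clean, but the proof is genuinely incomplete: you locate the crux (ruling out the case where $\hat F$ is supported on two distinct lines) and then stop. Your own cross example $a=b=\mathbf 1_{\{0\}}$ already shows that nothing further can be extracted from the fiber function $F$ alone, so an extra constraint on $E$ really is needed---but ``over- or under-concentration on some line'' is not a proof, and the concentration lemmas you have in mind apply only to spectral sets.

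The paper supplies the missing constraint as its very first step, and it is precisely the one you were reaching for: it shows that $E$ does \emph{not} determine the direction $d_0:=P^\perp$, i.e.\ $F\le 1$ everywhere. The argument is a pigeonhole count over the $p+1$ planes through a fixed translate $\ell$ of $d_0$: at least $p-1$ of them carry exactly $k$ points of $E$, and bounding the remaining two by $p$ points each yields
\[
|\ell\cap E| + (p-1)\bigl(k-|\ell\cap E|\bigr) + 2\bigl(p-|\ell\cap E|\bigr)\ \ge\ kp,
\]
hence $p\,|\ell\cap E|\le 2p-k$ and $|\ell\cap E|\le 1$. (This step tacitly uses the plane bound $|E\cap Q|\le p$ from Corollary~\ref{lemma:planeconcentrate}, which is available only for spectral $E$; the lemma is in practice applied only in that setting.) Once $F\le 1$, your own decomposition actually finishes the job more quickly than the paper does: $a(u)+b(w)\in\{0,1\}$ with $\min a=\min b=0$ and both $a,b$ integer-valued forces one of $a,b$ to vanish identically, so $\hat F$ lives on a single line and $E$ lies in $k$ parallel planes. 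The paper instead reaches the same single-line conclusion via a Plancherel computation on the projected set $B$; either way, the decisive move is the injectivity of the projection along $P^\perp$, which is exactly what your proposal omits.
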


\begin{proof}
Let  \(d^\perp\) be a plane satisfying the hypotheses of the lemma.   If \(\ell\) is some translate of \(d\), a calculation similar to \eqref{eq.1} yields
\[
\left| \ell \cap E \right| + \left( p-1 \right)\left(k-\left| \ell \cap E \right| \right) +2 \left(p-\left| \ell \cap E \right| \right) \geq kp.
\]
Rearranging gives
\[
2p-k \geq p\left| \ell \cap E \right|,
\]
so \(E\) does not determine \(d\).  Thus projecting \(E\) by \(d\) gives a subset \(B \subset \F_p^2\) of size \(kp\) whose Fourier transform is supported on two lines through the origin.

For \(1 \leq i \leq p\),  let \(\left\{ L_i \right\}\) and \(\left\{ K_i \right\}\) be the two families of parallel lines on which \(B\) may not be equidistributed.  Pick this parametrization so that
\[
\bigcup_i \left( L_i \cap K_i \right)
\]
is a line.  Without loss of generality we can assume that  \(L_i \cap K_i \in B\) for  \(i=1,...,k\).  Finally, let \(L\) be the translate of \(L_1\) that contains the origin.  Similarly,  let \(K\) be the translate of \(K_1\) that contains the origin.    Define
\[
\bar{L}_i:= \left| L_i \cap B  \right|,  \ \ \bar{K}_i:= \left| K_i \cap B  \right|. 
\]
This labelling implies that
\begin{align} \label{KLcases}
 \bar{L}_i + \bar{K}_i = \begin{cases} 
  p+k \quad &\text{for} \ \  1 \leq i \leq k \\
k \quad &\text{for}   \ \ k+1 \leq i \leq p .
 \end{cases}
 \end{align}
 
 Identify   the  lines with their indicator functions and 
consider  the  following sum of squares of \(L^2\) norms
\[
I:= \sum_{x \in \F_p^2} \left| B \ast L \left( x \right) \right|^2 + \sum_{x \in \F_p^2} \left| B \ast K \left( x \right) \right|^2. 
\]
Using \eqref{KLcases}, the sum is equal to
\begin{align*}
I= p \sum_{i=1}^p\bar{L}_i^2 + \bar{K}_i^2 &= p \sum_{i=1}^p \left(\bar{L}_i + \bar{K}_i\right)^2 -  2\bar{L}_i\bar{K}_i \\ &= p \left( k \cdot (p+k)^2+ (p-k) \cdot k^2 - 2 \sum_{i=1}^p\bar{L}_i\bar{K}_i \right). 
\end{align*}
On the other hand, Plancherel identity (\ref{eqn:plancherel}) gives
\begin{align*}
I&= p^2 \sum_{x \in \F_p^2} \left|  \widehat{B \ast L} \left( x \right) \right|^2 + p^2\sum_{x \in \F_p^2} \left| \widehat{B \ast K} \left( x \right) \right|^2\\ 
& = p^4 \left( \sum_{x \in \F_p^2} |\hat{B}(x)|^2\right) + p^4|\hat{B}(0)|^2 \\
&= kp^3 + k^2p^2.
\end{align*}
Thus \(\sum_{i=1}^p\bar{L}_i\bar{K}_i = k^2 p\) and \(\hat{B}\) is in fact supported on a single line through the origin.
 

\end{proof}


Lemma \ref{LmNotSpectral} is particularly helpful for sets \(E\) where \(\sup_{\ell}|E \cap \ell|\) is small.
\begin{lemma} \label{slabconcentration}
Suppose \( \left( E, A \right) \) a spectral with \(\left|E\right|=kp\) and \(E\) is contained in the union of \(k\) parallel planes.  If  \(\sup_\ell \left| E \cap \ell \right| = 2\), then \(k\) is even and \(A\) determines at most \(k-1\) directions of \(p\) planes, all of which intersect at the line through the origin perpendicular to the given planes.
\end{lemma}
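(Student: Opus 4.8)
The plan is to normalize coordinates, pin down the slice sizes of $E$, and then study the planar slices of $E$ through the lens of the uncertainty principle on $\mathbb Z/p\mathbb Z$. Put $d_0 := \langle e_3\rangle$ for the line through the origin perpendicular to the $k$ given planes, and (translating $E$) take the planes to be $\Pi_c := \{x_3 = c\}$, $c \in S$, with $|S| = k$ and each $\Pi_c$ meeting $E$; write $E_c := E \cap \Pi_c$. By Corollary~\ref{lemma:planeconcentrate} each $|E_c| \le p$, and since these $k$ numbers sum to $|E| = kp$ they all equal $p$; as $\sup_\ell |E \cap \ell| = 2$, each $E_c$ is thus a set of $p$ points of $\Pi_c \cong \mathbb F_p^2$ with no three collinear. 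Writing $\omega := e^{-2\pi i/p}$ and grouping the defining sum of $\widehat E$ by the value of $x_3$ gives, for every $\xi = (\xi_1,\xi_2,\xi_3)$,
\[
  \widehat E(\xi) = \sum_{c \in S} \widehat{E_c}(\xi_1,\xi_2,0)\,\omega^{c\xi_3},
\]
where $\widehat{E_c}$ is the transform of the indicator of $E_c \subset \mathbb F_p^3$; in particular $t \mapsto \widehat E(0,0,t) = p^{-2}\sum_{c\in S}\omega^{ct}$ is not identically zero on $t \neq 0$ (as $|S| = k < p$), so by Theorem~\ref{IMPdimfree} $A$ does not determine $d_0$.

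The $p+1$ planes containing $d_0$ are the $Q_w := w^\perp$ with $w$ ranging over the directions of $d_0^\perp = \{x_3 = 0\}$, and the $p$ directions of $Q_w$ other than $d_0$ are the lines $d_{w,c} := \langle (v,c)\rangle$, $c \in \mathbb F_p$, where $v$ is a fixed nonzero vector of $\{x_3=0\}$ orthogonal to $w$ (and $(v,c) := (v_1,v_2,c)$). Fix $w$. Feeding $\xi = s(v,c)$ into the displayed formula shows that if $A$ determines $d_{w,c}$ then, writing $f_s(c') := \widehat{E_{c'}}(sv_1,sv_2,0)$ — a function on $\mathbb F_p$ supported on $S$ — one has $\widehat{f_s}(sc) = 0$ for every $s \neq 0$. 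If some $f_{s_0}$ with $s_0 \neq 0$ is not identically zero, then by the uncertainty principle on $\mathbb Z/p\mathbb Z$ (a nonzero function supported on $|S|$ points has transform supported on at least $p+1-|S|$ points) $\widehat{f_{s_0}}$ has at most $p-(p+1-|S|) = k-1$ zeros, so at most $k-1$ values of $c$ can work; together with the exclusion of $d_0$ this yields ``$A$ determines at most $k-1$ directions of $Q_w$''. The opposite case, $f_s \equiv 0$ for all $s \neq 0$, means by Theorem~\ref{equidist} (applied to the sets $E_{c'}$) and $|E_{c'}| = p$ that every $E_{c'}$, $c'\in S$, is a \emph{perfect transversal} to the planar direction $w$, i.e. meets each line of $\Pi_{c'}$ in direction $w$ exactly once.

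To finish, note that a $p$-point set of $\mathbb F_p^2$ with no three collinear has exactly $\binom p2$ secants, and in any direction $\delta$ the $2$-, $1$- and $0$-point lines number $t_\delta$, $p-2t_\delta$, $t_\delta$, so $t_\delta \le \tfrac{p-1}{2}$; as $\sum_\delta t_\delta = \binom p2$, at most one direction has $t_\delta = 0$, i.e. such a set is a perfect transversal to at most one direction. Hence the $k$ sets $E_{c'}$ ($c' \in S$, $S \neq \emptyset$) share a common perfect-transversal direction for at most one value $w = w^\ast$, so for the remaining $p$ planes $Q_w$ — all passing through $d_0$, the line through the origin perpendicular to the given planes — we get the asserted bound; this settles the direction part. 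For the parity: if no direction is a common perfect transversal of all $E_c$, then every planar direction $\delta$ is a secant direction of some $E_c$, so $E - E$ meets every direction $\langle(\delta,0)\rangle$ of $\{x_3=0\}$; since $(A,E)$ is also a spectral pair (Theorem~\ref{them:symmetry}), Theorem~\ref{IMPdimfree} makes $\widehat A$ vanish on all of $\{x_3=0\}\setminus\{0\}$, and Lemma~\ref{lemma:planedist} then forces $|A| = p^2$, contradicting $|A| = kp \le (p-1)p$. So a common perfect-transversal direction exists; choosing $\mathbb F_p^2$-coordinates on the slices so that it is $\langle e_1\rangle$, we get $E = \{(\phi_c(j),j,c) : j \in \mathbb F_p,\ c \in S\}$ for functions $\phi_c \colon \mathbb F_p \to \mathbb F_p$ which, since no $E_c$ admits a second perfect transversal, are each non-injective with all fibers of size $\le 2$; the fiber-count of $E$ along $d_0$ is $g(a,b) = |\{c \in S : \phi_c(b) = a\}|$, and $\sup_\ell |E\cap\ell| = 2$ says precisely that $|\{c \in S : \phi_c(\ell(c)) = \lambda(c)\}| \le 2$ for every pair of affine maps $\ell,\lambda \colon \mathbb F_p \to \mathbb F_p$. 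One then shows this forces $g$ never to equal $1$, so that $|E| = kp = 2|g^{-1}(2)|$ is even and, $p$ being odd, $k$ is even. This last implication — a combinatorial fact about how $k$ at-most-two-to-one functions on $\mathbb F_p$ can avoid common affine behaviour on triples of $S$ — is the crux, and the step I expect to be the main obstacle; it is short for $p=5$ and is the principal extra technical work needed at $p=7$.
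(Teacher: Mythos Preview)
Your Fourier-analytic treatment of the direction bound is correct and is a clean alternative to the paper's combinatorial argument. Both approaches identify the same exceptional plane $(d^*)^\perp$ and bound the remaining $p$; yours does it by the Biro--Meshulam--Tao uncertainty bound on $\mathbb Z/p\mathbb Z$ (a nonzero function supported on $k$ points has transform with at most $k-1$ zeros), while the paper does it by a pigeonhole-type pairing of the $k$ ``singleton'' points $e_1^*,\ldots,e_k^*$ (one per slice) through a fixed one of them.

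The parity argument, however, has a genuine gap. You reduce ``$k$ even'' to the unproved claim that $g$ never takes the value $1$, and you explicitly flag this as the main obstacle. In fact this claim does \emph{not} follow from the combinatorial constraints you list (non-injective $\phi_c$ with fibers $\le 2$ and the affine $\le 2$ condition): for $k=2$ in $\F_5$ one can take two distinct conic-slices $E_0,E_1$ sharing the transversal direction $\langle e_1\rangle$ and easily produce $g(a,b)=1$ somewhere, so any proof must use spectrality again. More importantly, the paper's route is short, uniform in $p$, and requires no such claim. Having established that in each determined direction $\ell^*\subset P$ (there are exactly $p$ of them) every slice $E\cap P_i$ has precisely $\frac{p-1}{2}$ two-point lines and one singleton, count doubletons in a putative equidistribution family $\{Q_j\}$ containing $\ell^*$: each $Q_j$ contains $k$ points and $t_j\le\lfloor k/2\rfloor$ doubletons, while $\sum_j t_j=\frac{k(p-1)}{2}$. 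If $k$ is odd then $\sum_j t_j\le p\cdot\frac{k-1}{2}<\frac{k(p-1)}{2}$, so no such family exists for \emph{any} $\ell^*\neq d^*$. Hence $A-A\subset(d^*)^\perp$, forcing $A$ into a single plane and $|A|\le p$ by Corollary~\ref{lemma:planeconcentrate}, contradicting $|A|=kp\ge 2p$. This replaces your final paragraph entirely; in particular, the parity step is not harder for $p=7$ than for $p=5$.
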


\begin{proof}
Let \(P_1, \ldots, P_k\) be the given parallel planes and let \(P\) be the translate of \(P_1\) that contains the origin.  Note \(E\) does not determine every direction of \(P\).  Also note that no direction of \(E \cap P_i\) is determined more than \(\frac{p-1}{2}\) times.  Since there are \(\frac{p \left( p-1 \right)}{2}\) pairs of points of \(E \cap P_i\), each direction of \(P\) except one is determined \(\frac{k\left( p-1 \right)}{2}\) times; indeed,\(\frac{\left( p-1 \right)}{2}\) from each plane.

If \(k\) is odd, and \(\ell^*\) is any direction of \(P\) determined by \(E\), the inequality
\[
k \left(p-1\right) > \left(k-1\right)p
\]
implies that \(E\) is not equidistributed on any family of parallel planes containing \(\ell^*\), which contradicts spectrality. Therefore, $k$ must be  even.   Further, \(E\) can not be equidistributed on a family of parallel planes containing \(\ell^*\) unless each of the planes contains an even number of points that do not determine \(\ell^*\).  So any equidistribution induces a (unique, non-overlapping) pairing of one of these points with at least one other.  There are at most \(k-1\) such parings.
\end{proof}

\begin{corollary} \label{p2exclusion}
No subset of size $p(p-2)$ is spectral. 
\end{corollary}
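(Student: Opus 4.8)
The plan is to argue by contradiction and to funnel everything into a parity obstruction supplied by Lemma~\ref{slabconcentration}. Suppose $E\subseteq\F_p^3$ is spectral with $|E|=p(p-2)$ and let $A$ be a spectrum, so that $(E,A)$ is a spectral pair with $|A|=|E|=kp$ where $k:=p-2$; the relevant range is $p\ge 5$ (cf.\ Theorem~\ref{prop:easyred}), and the decisive feature is that $k=p-2$ is \emph{odd}. By Lemma~\ref{lemma:lineconcen} one has $\sup_\ell|E\cap\ell|\le\min\{k,p-k\}=\min\{p-2,2\}=2$, and since $|E|\ge 2$ this supremum is exactly $2$; likewise $\sup_\ell|A\cap\ell|=2$, so $A$ has no three collinear points, and by Corollary~\ref{lemma:planeconcentrate} every affine plane meets $A$ in at most $p$ points. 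The whole strategy is: if I can show $E$ is contained in a union of $k$ parallel planes, then Lemma~\ref{slabconcentration} applies to $(E,A)$ and forces $k$ to be even, contradicting $k=p-2$ odd.

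To place $E$ inside $k$ parallel planes I will apply Lemma~\ref{LmNotSpectral} to a well chosen plane $P$. First I would observe that not every affine plane can meet $A$ in at most $p-2$ points: otherwise, for each direction $d$ the $p$ translates of $d^\perp$ would each contain at most $p-2$ points of $A$ while summing to $p(p-2)$, hence each would contain exactly $p-2$, so $A$ would be equidistributed on every family of parallel planes; by Theorem~\ref{equidist} this gives $\hat A(\xi)=0$ for all $\xi\ne 0$, i.e.\ $A$ would have constant indicator function, which is impossible. So there is an affine plane $P_0$ with $|A\cap P_0|=s\in\{p-1,p\}$; let $P$ be the $2$-dimensional subspace parallel to $P_0$. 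Since $A\cap P_0$ is an arc of size $s$ in $P_0\cong\F_p^2$, it determines at least $p-1$ of the $p+1$ directions of $P$ (this is the point I address in the last paragraph). For each direction $d\subset P$ determined by $A$, pick distinct $a_1,a_2\in A$ with $a_1-a_2\in d$; then $\hat E(a_1-a_2)=0$ by Theorem~\ref{IMPdimfree}, so by Theorem~\ref{equidist} $E$ is equidistributed on the planes orthogonal to $a_1-a_2$, that is, on the translates of $d^\perp$. Hence $E$ is equidistributed on the planes parallel to $d^\perp$ for at least $p-1$ directions $d\subset P$, and Lemma~\ref{LmNotSpectral} (with $k=p-2$) puts $E$ inside a union of $k$ parallel planes, closing the contradiction as above.

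The one genuinely substantive input, and the step I expect to be the main obstacle, is the assertion that an arc of size $s\in\{p-1,p\}$ in $\mathrm{AG}(2,p)$ determines at least $p-1$ directions. For $s=p$ this is elementary: since no three arc points are collinear, the $\binom{p}{2}$ secant segments lie on $\binom{p}{2}$ distinct lines, and each direction carries at most $\lfloor s/2\rfloor=(p-1)/2$ of them, so at least $\binom{p}{2}\big/\tfrac{p-1}{2}=p$ directions are determined. For $s=p-1$ the same pigeonhole gives only $\ge p-2$, and ``parallelogram''-type configurations show that counting alone cannot do better; to recover the extra direction one invokes that for $p\ge 5$ every $(p-1)$-arc of $\mathrm{PG}(2,p)$ is contained in a conic (a Segre/Thas-type extendability theorem), after which a short case analysis on the number of points of that conic lying on the line at infinity shows at most two directions of $P$ are missed. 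For the primes $p=5,7$ that Theorem~\ref{thrm:main} actually requires, this last claim can alternatively be verified by a direct finite check.
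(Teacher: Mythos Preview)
Your overall architecture is exactly the paper's: use Lemma~\ref{lemma:lineconcen} to force $\sup_\ell|E\cap\ell|=\sup_\ell|A\cap\ell|=2$, locate an affine plane in which $A$ determines at least $p-1$ directions, feed that into Lemma~\ref{LmNotSpectral} to trap $E$ in $p-2$ parallel planes, and finish with the parity clause of Lemma~\ref{slabconcentration}. So structurally you and the paper agree.

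The difference is in how you produce the ``rich'' plane. Your equidistribution argument only yields some $P_0$ with $|A\cap P_0|\in\{p-1,p\}$, and you then split into two cases; the $s=p-1$ case forces you to invoke a Segre/Thas-type extendability theorem to get $\ge p-1$ determined directions. The paper instead fixes a line $\ell$ with $|A\cap\ell|=2$ and counts $|A|$ over the $p+1$ planes through $\ell$:
\[
2+(p+1)\bigl(\sup_P|A\cap P|-2\bigr)\ \ge\ |A|=p(p-2),
\]
which gives $\sup_P|A\cap P|\ge p-1+\tfrac{1}{p+1}$, hence $\sup_P|A\cap P|=p$ by integrality and Corollary~\ref{lemma:planeconcentrate}. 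With $s=p$ your own pigeonhole (\,$\binom{p}{2}$ secants, each direction used at most $(p-1)/2$ times\,) already gives $\ge p$ directions, so the $s=p-1$ case never arises and no projective-geometry input is needed. In short, your proof is correct but imports substantially heavier machinery than necessary; the paper's one-line pencil count through a $2$-secant eliminates the awkward case entirely.
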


\begin{proof}
As before, we will assume that such a spectral set exists and derive a contradiction.  Let 
\(E \subset \F_p^3\) and \(\left| E \right| = p\left( p-2 \right) \).  Assume that  \(E\) has a spectrum \(A\).  Define \(m:= \sup_P \left| A \cap P\right|\).  There is some line \(\ell\) with \(\left| A \cap \ell \right|=2\). It follows that
\[
2 + \left(p+1 \right) \left(m-2 \right) \geq \left(p-2 \right)p, 
\]
which implies
\[
m \geq p-1+\frac{1}{p+1}.
\]
Since \(m\) is an integer,  we have \(m \geq p\).  By Corollary \ref{lemma:planeconcentrate}, we must then have \(m=p\).  By Lemma \ref{LmNotSpectral}, \(E\) is contained in the union of \(p-2\) parallel planes.  Since no three points of \(E\) are collinear, \(p-2\) is even by Lemma \ref{slabconcentration}, which is a contradiction.
\end{proof}

All results up to this point hold for all \(p\). We now address the cases of \(p=5,7\).

\begin{corollary} \label{tenexclusion}
The Fuglede conjecture holds in \(\F_5^3\).
\end{corollary}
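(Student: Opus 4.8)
The plan is to invoke Theorem~\ref{prop:easyred}, which reduces the Fuglede conjecture in \(\F_5^3\) to showing that no subset of size \(10,15,\) or \(20\) is spectral. The sizes \(20=p(p-1)\) and \(15=p(p-2)\) are already excluded by Corollaries~\ref{cor:nobigspectral} and~\ref{p2exclusion}, so the only new case is size \(10=2p\). For that case fix a spectral pair \((E,A)\) with \(|E|=|A|=10\); Lemma~\ref{lemma:lineconcen} gives \(\sup_\ell|E\cap\ell|=\sup_\ell|A\cap\ell|=2\) (in particular, neither set has three collinear points), and Corollary~\ref{lemma:planeconcentrate} gives \(\sup_P|E\cap P|,\ \sup_P|A\cap P|\le 5\).

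First I would locate \(E\). Choosing a line \(\ell\) with \(|A\cap\ell|=2\), the six planes through \(\ell\) satisfy \(\sum_i|A\cap P_i|=|A|+5\cdot 2=20\), so some plane contains at least \(4\) points of \(A\); four points with no three collinear in a plane always determine at least \(4\) of its \(6\) directions, and by Theorems~\ref{IMPdimfree} and~\ref{equidist} every direction determined by \(A\) is one on which \(\hat E\) vanishes. Thus Lemma~\ref{LmNotSpectral} applies with \(k=2\), and \(E\) lies in a union of two parallel planes; since \(|E|=10\) and no plane holds more than \(5\) points of \(E\), these are two translates of a single \(2\)-dimensional subspace \(V_0\) with \(5\) points of \(E\) in each. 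This \(V_0\) is unique, since two such decompositions would put \(5\) points of \(E\) on two lines inside \(V_0\), forcing three collinear.

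Next I would locate \(A\) via Lemma~\ref{slabconcentration}. Applied to \((E,A)\) it shows that among the six \(2\)-dimensional subspaces through \(\ell_0:=V_0^\perp\), five ``bad'' ones each carry at most one direction determined by \(A\). From any \(a_0\in A\) the nine differences to the other points of \(A\) are nine distinct \(A\)-directions, so at most one lies in each bad subspace and hence at least four lie in the sixth, ``good'' subspace \(W\); then \(|A\cap(a_0+W)|\ge5\), so \(=5\). Letting \(a_0\) vary, \(A\) lies in two parallel translates of \(W\) with \(5\) points in each; feeding the \(\ge5\) directions of \(W\) determined by the \(5\) points of \(A\) in one translate back through Lemma~\ref{LmNotSpectral} (applied to \((A,E)\) via Theorem~\ref{them:symmetry}) shows \(E\) lies in translates of \(W\) too, so by uniqueness \(W=V_0\), i.e.\ \(\ell_0\subseteq V_0\). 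Now \(E\cap V_0,\ E\cap(V_0+q),\ A\cap V_0,\ A\cap(V_0+r)\) are \(5\)-point arcs in an affine plane; since \(A\) occupies only two of the five translates of \(V_0\), \(\hat A\) does not vanish on \(\ell_0\), so \(E\) does not determine \(\ell_0\), and symmetrically for \(A\). Hence each of these four arcs meets every line parallel to \(\ell_0\) in exactly one point, and a short interpolation argument (no three collinear forces the interpolating polynomial to have degree \(\le2\)) identifies each, after an affine change of coordinates, with a parabola \(\{(i,ai^2+bi+c):i\in\F_5\}\) with \(a\neq0\).

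Finally I would extract a contradiction from the surviving orthogonality relations. Tracking the directions appearing in \((E\cap V_0)-(E\cap(V_0+q))\) shows that for each of the five directions \(e\neq\ell_0\) of \(V_0\) there is a direction \(\eta\) on which \(\hat A\) must vanish with \(\eta^\perp\cap V_0=e\); equidistribution of \(A\) on the translates of \(\eta^\perp\) says precisely that the point counts of \(A\cap V_0\) and \(A\cap(V_0+r)\) along the lines of direction \(e\) are complementary, i.e.\ sum to the constant \(2\). Counting points of a parabola on a line of slope \(s\) via the Legendre symbol of the discriminant \((b-s)^2-4a(c-u)\) (linear in the line parameter \(u\)), complementarity for a given \(s\) forces the two discriminants to share a zero and to have leading coefficients of non-square ratio; imposing this for all five slopes makes a degree-\(2\) polynomial in \(s\) vanish identically on \(\F_5\), forcing the two leading coefficients equal — contradicting the non-square ratio. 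This rules out spectral sets of size \(10\) and finishes the proof. I expect the main obstacle to be exactly this last step: choosing coordinates on the ``degenerate'' plane \(V_0\) (which contains its own normal \(\ell_0\)), correctly aligning the line-families cut out by the various planes \(\eta^\perp\) on the two halves of \(E\) and of \(A\), and organizing the discriminant computation so the parabola impossibility applies uniformly over all five slopes.
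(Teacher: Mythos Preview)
Your reduction to size $10$ and the location of $E$ in two translates of a unique plane $V_0$ are correct, and the pigeonhole in your second step is a nice argument: it really does show that $A$ sits in two translates of the ``good'' plane $W=(\ell')^\perp$, where $\ell'$ is the single direction of $V_0$ not determined by $E$. The gap is the next sentence. Lemma~\ref{LmNotSpectral} does \emph{not} place $E$ in translates of the input plane $W$; if you trace its proof, it projects along $d:=W^\perp$ and shows the image lies on $k$ parallel lines, so $E$ lies in $k$ translates of some plane \emph{containing} $W^\perp=\ell'$. By your uniqueness argument that plane is $V_0$---which indeed contains $\ell'$---but this gives no relation between $W$ and $V_0$ beyond $W^\perp\subset V_0$, which was automatic. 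The identity $W=V_0$ would force $\ell'=V_0^\perp$, i.e.\ $V_0^\perp\subset V_0$, and nothing you have established excludes the non-degenerate case $V_0^\perp\not\subset V_0$. Your entire parabola endgame is built on the assumption that $\ell_0=V_0^\perp$ is a direction of $V_0$ (and the undetermined one), so it rests on an unproved hypothesis.

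The paper's finish avoids this and is much shorter. Once $A$ is in two parallel planes (your argument, or the symmetric pigeonhole), apply Lemma~\ref{slabconcentration} to the pair $(A,E)$: then $E$ determines at most $k-1=1$ direction of each of five planes through $W^\perp=\ell'$, while the sixth such plane is precisely $V_0$. Together with the five in-plane directions of $V_0$ this caps the total number of directions determined by $E$ at ten. But the $25$ cross pairs between the two $V_0$-slabs of $E$ determine directions outside $V_0$, each at most five times; with only five available, each occurs exactly five times, so $E\cap P_2=(E\cap P_1)+v$ for (at least two) distinct translation vectors, making $E\cap P_1$ invariant under a nonzero translation---impossible for a $5$-point planar set with no three collinear. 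That is the paper's ``no subset of $\F_5^2$ is closed under a nontrivial translation'' line; you were one application of Lemma~\ref{slabconcentration} away from it.
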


\begin{proof}
From the previous corollary, we now only need to consider sets of size \(10\).  Suppose \(\left(E,A\right)\) is a spectral pair with \(\left| E \right| = \left| A \right| = 10\).  By a similar calculation to \eqref{eq.1}, if \(A\) determines \(d_0\) there is a plane \(P_0\) containing a translate of \(d\) with \(\left|A \cap P_0 \right| \geq 4\).  By Lemma \ref{LmNotSpectral}, Lemma \ref{slabconcentration} and the symmetry condition,  \(E\) determines at most ten directions and is contained in the union of two parallel planes.  This is a contradiction, since no subset of \(\F_5^2\) is closed under a nontrivial translation.
\end{proof}

\section{The case \(p=7\)} \label{sectionseven}

The strategy for \(p=7\) is similar in spirit to the \(p=5\) case, but more complicated. Recall that by Theorem \ref{IMPdimfree}, a nontrivial spectral set \(E\) has size \(|E| = 7m\) for \(1 \leq m \leq 7\).  In this section, we rule out the possibility of spectral sets of size \(14, 21\), and \(28\).  We begin with a two dimensional lemma.

\begin{lemma}\label{Lm1}
Suppose \(E \subset \F_7^2\) has five points, no three of which are collinear.  Then \(E\) determines at least six directions.
\end{lemma}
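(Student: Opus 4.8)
The plan is to turn the statement into a counting identity and then into a small, explicitly solvable linear system over \(\F_7\), with a contradiction coming from a quadratic non-residue.

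First I would set up the count. There are \(p+1=8\) directions in \(\F_7^2\), and \(E\) contributes \(\binom{5}{2}=10\) point pairs, each determining exactly one direction. Since no three points of \(E\) are collinear, every line meets \(E\) in at most two points, so a direction \(d\) is determined by exactly \(a_d\) pairs, where \(a_d\in\{0,1,2\}\) is the number of lines in direction \(d\) carrying two points of \(E\) (two such lines already exhaust four of the five points). Writing \(n_j=\#\{d:a_d=j\}\) we get \(n_0+n_1+n_2=8\) and \(n_1+2n_2=10\), hence \(n_0=n_2-2\) and the number of determined directions is \(k=n_1+n_2=10-n_2\). Since \(n_1\ge 0\) forces \(n_2\le 5\), this already gives \(k\ge 5\), with equality exactly when \((n_0,n_1,n_2)=(3,0,5)\). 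So the whole problem reduces to ruling out the extremal configuration in which every determined direction is determined by two (necessarily disjoint) pairs.

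Assume we are in that extremal case. Each point lies in four pairs, which determine four \emph{distinct} directions (no three collinear), so each point is the unique ``missing'' point of exactly one of the five directions, and conversely each direction omits exactly one point; this gives a bijection between \(E\) and the five directions, the two pairs attached to a direction forming a perfect matching on the other four points. I would translate one point to \(0\), name the others \(v_2,\dots,v_5\), and relabel so the direction omitting \(0\) is \(\langle v_2-v_3\rangle=\langle v_4-v_5\rangle\). The remaining four ``difference'' pairs \(\{v_2,v_4\},\{v_2,v_5\},\{v_3,v_4\},\{v_3,v_5\}\) must then be matched to \(\langle v_2\rangle,\dots,\langle v_5\rangle\), with \(\{v_i,v_j\}\) forbidden to hit \(\langle v_i\rangle\) or \(\langle v_j\rangle\) (else \(0,v_i,v_j\) are collinear); a short check shows only two matchings survive and that the relabeling \(v_4\leftrightarrow v_5\) interchanges them, so up to symmetry I may assume \(v_3-v_5\parallel v_2\), \(v_2-v_4\parallel v_3\), \(v_2-v_5\parallel v_4\), \(v_3-v_4\parallel v_5\). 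Taking \(\{v_2,v_3\}\) as a basis (they span since \(\langle v_2\rangle\ne\langle v_3\rangle\)), the first two relations give \(v_4=v_2-bv_3\) and \(v_5=-av_2+v_3\) with \(a,b\ne 0\); feeding these into the remaining two relations and equating coefficients yields four scalar equations in \(a,b\) and two auxiliary proportionality constants, and eliminating them collapses everything to \(c^2-c-1=0\), i.e.\ \((2c-1)^2=5\) in \(\F_7\). Since the squares mod \(7\) are \(\{0,1,2,4\}\) and \(5\) is not among them, there is no solution, so the extremal configuration cannot occur and \(k\ge 6\).

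The step I expect to be the main obstacle is the combinatorial bookkeeping in the third paragraph: one must check carefully that the missing-point bijection genuinely forces the matching down to (essentially) a single configuration, so that the resulting system is both determined and short enough to solve by hand. Everything after that is a routine \(2\times 2\) linear computation plus one quadratic-residue check. An alternative to enumerating matchings is to invoke the essential uniqueness of the proper \(5\)-edge-colouring of \(K_5\), which pins the configuration to the cyclic one directly; either route leads to the same small system over \(\F_7\).
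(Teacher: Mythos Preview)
Your proof is correct and follows essentially the same approach as the paper: both reduce via the count \(\binom{5}{2}=10\) to the extremal case where each of five directions is hit exactly twice, then normalize coordinates and derive a quadratic with discriminant \(5\), a non-residue in \(\F_7\). The only difference is in the normalization bookkeeping: the paper observes that every four-point subset must determine all five directions and then writes \(E=\{(0,0),(0,1),(1,0),(1,a),(b,1)\}\) directly, arriving at \(b^2-b-1=0\); you instead set up the point--direction bijection and a matching on the remaining pairs before choosing a basis, which is a more structural but longer path to the same quadratic.
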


\begin{proof}
The collinearity restriction means that any direction is realized at most twice.  Since \(\binom{5}{2}= 10\), \(E\) must determine five distinct directions. 
For the sake of contradiction, suppose  that the set \(E\) determines exactly 5 directions.  Then each direction   is realized exactly twice.

If some 4 point subset \(E' \subset E\) determines only 4 directions, that implies that the remaining direction is realized by two lines joined to the same point, which contradicts the collinearity assumption.

So, without loss of generality, assume that 
\[
E = \left\{ \left(0, 0 \right), \left(0, 1 \right), \left(1, 0 \right), \left(1, a \right), \left(b, 1 \right) \right\}, 
\]

\noindent where \(a \neq 1, 6\) and \(b \neq 1,6 \).  This means that \(\left(1, a \right)\) is a multiple of \( \left(b-1, 1 \right)\) and \(\left(b, 1 \right)\) is a multiple of \(\left( 1, a-1 \right) \), which implies
\[
b^2-b-1=0
\]
which has no solution in \(\F_7\), and we are done. 
\end{proof}

We can now show that spectral sets of size 28 and 14 do not exist.  The size 21 case will require further lemmas.

\begin{corollary}\label{28} 
There is no spectral set of size 28 in \(\F_7^3\). 
\end{corollary}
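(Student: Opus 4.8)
The plan is to derive a contradiction from the assumption that $(E,A)$ is a spectral pair with $|E|=|A|=28=7\cdot 4$, so $m=4$ in Theorem \ref{IMPdimfree}. First I would record the constraints coming from the earlier lemmas: by Lemma \ref{lemma:lineconcen}, $\sup_\ell|E\cap\ell|\le\min\{4,3\}=3$ and likewise for $A$, so neither set has $4$ collinear points; and by Corollary \ref{lemma:planeconcentrate}, no plane meets $E$ or $A$ in more than $7$ points.

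Next, following the style of Corollary \ref{p2exclusion}, I would show that $A$ is heavily concentrated on some plane. Since $|A|>p$, $A$ determines some direction, so there is a line $\ell$ with $c:=|A\cap\ell|\ge2$; summing over the $p+1$ planes through $\ell$ (which pairwise meet in $\ell$ and cover $\F_7^3$) gives $28\le c+(p+1)(\sup_P|A\cap P|-c)$, hence $8\sup_P|A\cap P|\ge 28+7c\ge 42$, so $\sup_P|A\cap P|\ge 6$. Thus there is a plane $P$ with $|A\cap P|\in\{6,7\}$. Now I would invoke the geometric input that $6$ or $7$ points of $\F_7^2$ with no $4$ collinear determine at least $6$ directions: when a $5$-point subset with no $3$ collinear can be extracted this is Lemma \ref{Lm1}, and the remaining configurations are an elementary check exploiting that $\F_7^2$ has only $8$ directions (e.g. two disjoint collinear triples among the points would already produce $9$ distinct ``cross'' directions, which is impossible). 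Hence $A$ determines at least $6=p-1$ directions $d\subset P$, so by Theorem \ref{equidist} $E$ is equidistributed on the planes orthogonal to each such $d$, and Lemma \ref{LmNotSpectral} (with $k=4$) forces $E$ into a union of $4$ parallel planes; as these contain at most $7$ points each and $28$ in total, each contains exactly $7$. By the symmetry of Theorem \ref{them:symmetry}, the same argument applied to $(A,E)$ puts $A$ into a union of $4$ parallel planes with $7$ points apiece.

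The hard part, and the step I would flag as the main obstacle, is ruling out this rigid ``$7+7+7+7$ in a slab'' configuration. A promising approach: if $u$ is the common normal of the four planes containing $E$, then $\hat E(tu)$ equals $7/p^3$ times a sum of four $7$th roots of unity, which by the Lam--Leung theorem on vanishing sums of $p$th roots of unity cannot vanish since $7\nmid 4$; hence $\hat E(tu)\ne 0$ for all $t\ne0$, so $A-A$ avoids $\langle u\rangle\setminus\{0\}$, i.e. $A$ meets every line in direction $u$ at most once, and symmetrically $E$ meets every line in the normal direction $v$ of $A$'s slab at most once. I would then run a case analysis on whether $u$ and $v$ are isotropic and whether $\langle u\rangle=\langle v\rangle$, projecting along $u$ (respectively $v$) to contradict the equidistribution forced by spectrality -- for instance, when $\langle u\rangle=\langle v\rangle$ one is reduced to a configuration of $28$ points meeting every line of a prescribed direction at most once inside a union of $4$ parallel planes, which over-determines the slice sizes. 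Making this last case analysis airtight is where essentially all the difficulty lies; everything preceding it is assembled from the lemmas already in hand.
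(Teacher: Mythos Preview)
Your argument is incomplete, and you say so yourself: the ``hard part'' of ruling out the $7{+}7{+}7{+}7$ slab configuration is left as a sketched case analysis that is never carried out. (Also, the parenthetical justification you give for ``$6$ or $7$ points with no four collinear determine at least $6$ directions'' is wrong: two disjoint collinear triples on parallel lines certainly do \emph{not} produce nine distinct cross directions---two arithmetic progressions of the same step give only five.)

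The paper avoids this difficulty entirely by inserting one extra reduction before passing to the slab. From $\sup_\ell|E\cap\ell|\le 3$ one first observes that every $2$-dimensional subspace $P$ has a translate meeting $E$ (and, by symmetry, $A$) in at least $\lceil 28/7\rceil=4$ points with at most three collinear, so $E$ and $A$ each determine at least \emph{four} directions of every plane. Now suppose $\sup_\ell|E\cap\ell|=3$ and pick $\ell$ realizing it. Since $A$ determines at least four directions of $\ell^{\perp}$, at least four of the eight planes through $\ell$ meet $E$ in exactly $m=4$ points; the remaining four meet $E$ in at most $7$. Summing, $|E|\le 3+4\cdot 1+4\cdot 4=23<28$, a contradiction. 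Hence $\sup_\ell|E\cap\ell|=\sup_\ell|A\cap\ell|=2$. Now your pigeonhole gives a plane with $|A\cap P|\ge 5$ and \emph{no three collinear}, so Lemma~\ref{Lm1} applies on the nose and, via Lemma~\ref{LmNotSpectral}, puts $E$ into four parallel planes. Crucially, Lemma~\ref{slabconcentration} now applies (its hypothesis is precisely $\sup_\ell|E\cap\ell|=2$) and says $A$ determines at most $k-1=3$ directions of certain planes through the normal line---contradicting the ``at least four directions of every plane'' fact just established. That finishes the proof with no residual case analysis; the missing move is the reduction to $\sup_\ell=2$, which is exactly what unlocks Lemma~\ref{slabconcentration}.
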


\begin{proof}
If \(E \subset \F_7^3\) is spectral with size 28 and spectrum \(A\), then \(\sup_\ell \left|E \cap \ell \right| \leq 3\), and so \(E\) determines at least four directions of any plane.  If, in fact, \(\sup_\ell \left|E \cap \ell \right| = 3\), the bound \(\sup_P \left|E \cap P \right| \leq 7\) gives the calculation
\[
23 = 1+1+1+1+3+4+4+4+4 \geq 28,
\]
which is false.  So \(\sup_\ell \left|E \cap \ell \right| = \sup_\ell \left|A \cap \ell \right| = 2\).
Then \(\sup_P \left|A \cap P \right| \geq 5\) implies, by Lemmas \ref{Lm1} and   \ref{slabconcentration} that \(A\) determines at most three directions of a plane, which cannot be.
\end{proof}

\begin{corollary}\label{14}
There is no spectral set of size 14 in  \(\F_7^3\). 
\end{corollary}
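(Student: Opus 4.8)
The plan is to argue by contradiction: suppose $(E,A)$ is a spectral pair with $|E|=|A|=14$, so that $m=2$ in the notation of Theorem \ref{IMPdimfree}. By Lemma \ref{lemma:lineconcen}, $\sup_\ell|E\cap\ell|\le\min\{2,5\}=2$ and the same holds for $A$, so neither set has three collinear points; by Corollary \ref{lemma:planeconcentrate}, no plane meets $E$ or $A$ in more than $7$ points; and by Corollary \ref{lemma:planedirect}, neither set determines all $8$ directions of any plane. Moreover, since $|A|>7$, for every direction $d$ some plane parallel to $d^\perp$ contains two points of $A$, so $A$ determines a direction $d'\subset d^\perp$; Theorems \ref{equidist} and \ref{IMPdimfree} then force $E$ to be equidistributed --- exactly two points per plane --- on the foliation parallel to $(d')^\perp$, whose planes contain translates of $d$. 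By Theorem \ref{them:symmetry} the analogous statement holds with $E$ and $A$ interchanged.

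The aim is to produce a plane on which one of the two sets determines at least $p-1=6$ of its $8$ directions, and then bootstrap exactly as in the $p=5$ case (Corollary \ref{tenexclusion}). Indeed, once such a plane is in hand, Lemma \ref{LmNotSpectral}, applied to a set of size $kp$ with $k=2$, places the \emph{other} set inside a union of two parallel planes; that set has line-concentration exactly $2$ (it cannot be $1$, since any two of its $14$ points are collinear), so Lemma \ref{slabconcentration} applies: $k$ must be even --- no obstruction at $k=2$ --- and the set determines at most $k-1=1$ direction on each plane of the pencil through the axis $\rho'$ perpendicular to the slab. One then matches the $6$ directions already found against this pencil --- or re-runs the dichotomy with $E$ and $A$ swapped --- to reach a contradiction. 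A convenient extra fact here: if $E$ lies in two parallel planes perpendicular to $\rho'$, then $A$ cannot determine $\rho'$, for otherwise $E$ would be forced to sit with two points in \emph{each} of the seven planes of that foliation instead of in just two of them; leveraging this forced non-equidistribution alongside the $6$-direction information should close the argument.

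The one genuine obstacle is producing the plane with $\geq 6$ determined directions. For size $28$ this is cheap: fixing a line $\ell$ with two points of $A$ and summing $|A\cap P|$ over the $p+1=8$ planes through $\ell$ gives $28+2p=42>8\cdot 5$, so some plane carries $\ge 6$ points of $A$ and Lemma \ref{Lm1} upgrades this to six determined directions. For size $14$ the identical count yields only $14+2p=28=8\cdot 3.5$, hence merely a plane with $4$ points of $A$; and a $4$-point set with no three collinear is only guaranteed to determine $4$ directions (via the three perfect matchings of $K_4$ one checks that $3$ directions would force three collinear points, while $4$ directions does occur, e.g. for a parallelogram). Bridging the gap between $4$ and $6$ is where $p=7$-specific combinatorics must enter --- precisely the difficulty the introduction flags in moving from $p=5$ to $p=7$. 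Concretely, I expect one must eliminate the regime where \emph{every} plane meets both $E$ and $A$ in at most $4$ points: counting the $\binom{14}{2}=91$ pairs, each direction realized at most $7$ times, shows each set determines at least $13$ directions, and combining this with the many two-points-per-plane foliations above, the bound $\sup_P|\cdot\cap P|\le 7$, and the no-three-collinear restriction should either expose a plane with $\ge 5$ points --- whence Lemma \ref{Lm1} supplies the needed six directions --- or directly contradict spectrality. Making this final elimination airtight, and tracking which plane of the Lemma \ref{slabconcentration} pencil matches the six-direction plane, is the technical heart of the proof.
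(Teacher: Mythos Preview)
Your proposal is not a complete proof, and you say so yourself. The dichotomy you set up---either some plane meets one of $E,A$ in at least five points, or $\sup_P|E\cap P|\le 4$ and $\sup_P|A\cap P|\le 4$---is exactly the paper's opening split, and your treatment of the first branch via Lemma~\ref{Lm1}, Lemma~\ref{LmNotSpectral}, and Lemma~\ref{slabconcentration} runs parallel to the paper's first paragraph (the paper closes that branch with the remark that no nonempty subset of $\F_7^2$ is its own nontrivial translate). The genuine gap is the second branch. There you only speculate: the direction count $\binom{14}{2}/7=13$ is correct but does not by itself produce a plane with five points, and you give no mechanism that does. ``Combining this with\ldots should either expose a plane with $\ge 5$ points\ldots or directly contradict spectrality'' is a hope, not an argument.

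The paper's handling of the $\sup_P\le 4$ regime is different in kind from what you sketch; it never tries to manufacture a fifth coplanar point. Instead: fix any direction $d_0$ determined by $E$. From $\sup_P|E\cap P|\le 4$ and the calculation behind \eqref{eq.1} one sees that $E$ is equidistributed on at most two foliations by planes containing $d_0$, so $A$ determines at most two directions $d_1,d_2$ of $d_0^\perp$, with $d_1$ determined at least as often as $d_2$. The core step is the claim that for every $d\subset d_1^\perp$ other than $d_0$, $A$ is \emph{not} equidistributed on the foliation by $d^\perp$. This is proved by a short case analysis on how many times $A$ determines $d_1$ (necessarily four, five, six, or seven): if seven, project along $d_1$ and invoke Lemma~\ref{Lm1} on the resulting seven-point set with no three collinear; if four through six, argue via the presence or absence of a plane $P$ with $|P\cap A|=3$ containing a $d_1$-pair. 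Since $(A,E)$ is also spectral, the claim means $E$ determines no direction of $d_1^\perp$ except $d_0$; as $E$ has two points on each translate of $d_1^\perp$, this forces $d_0$ to be determined seven times. Because $d_0$ was arbitrary, every direction $E$ determines is determined seven times, which drives $\sup_P|E\cap P|$ down to $2$---contradicting $|E|=14$. This direction-chasing is precisely the ``$p=7$-specific combinatorics'' you anticipated but did not supply.
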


\begin{proof}
As before, let \(\left(E, A \right)\) is a spectral pair with \( \left| E \right| = \left| A \right|=14\).  If we have \(\sup_P \left|A \cap P \right| \geq 5\), then by Lemma \ref{Lm1}, Lemma \ref{slabconcentration}, and symmetry property Theorem \ref{them:symmetry}, we have that \(E\) is contained in the union of two parallel planes and determines at most 14 directions.  This is a contradiction, since no non-empty subset of \(\F_7^2\) is its own non-trivial translate.

So we have
\begin{equation} \label{ineqaulfourt}
\sup_P \left| A \cap P \right| \leq 4 \,\, \text{and} \, \, \sup_P \left|E \cap P \right| \leq 4.
\end{equation}
Let \(E\) determine \(d_0\). The inequality \eqref{ineqaulfourt} implies that \(E\) is equidistributed on at most two families of parallel planes containing \(d_0\).  So \(A\) determines at most two directions of \(d_0^\perp\), call them \(d_1\) and \(d_2\).  Without loss of generality, \(A\) determines \(d_1\) more often than  \(d_2\).

The claim is that for all directions \(d \subset d_1^\perp\) except \(d_0\), \(A\) is not equidistributed on \(d^\perp\).

If \(A\) determines \(d_1\) seven times, projecting by \(d_1\) gives a 7 element subset of \(\F_7^2\) with no three points collinear.  Such a subset must determine 7 directions, which proves the claim.

Now we are left to consider the case where \(d_1\) is determined four, five, or six times.  Observe that the existence of a plane \(P\) such that \(P \cap E\) determines \(d_1\) and \(\left| P \cap E \right| = 3\) implies the claim.  If \(d_1\) is determined five or six times, such a plane must exist.  If \(d_1\) is determined four times, the non-existence of such a plane also implies the claim.

Then, by symmetry, \(E\) determines \(d_0\) seven times.  Since \(d_0\) was arbitrary, any direction determined by \(E\) is determined seven times.  This implies \(\sup_P \left| E \cap P \right|=2 \), which is false.
\end{proof}

\subsection{Proof of $E$ with size $|E|=21$ is not spectral}\label{section21notspectral}
Finally, we give the last three lemmas which we  need to take care of the size 21 case.

\begin{lemma}\label{Lm2}
Suppose \(E \subset \F_7^2\) has seven points, no four of which are collinear.  Then \(E\) determines at least six directions.
\end{lemma}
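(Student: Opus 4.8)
My plan is to mimic the counting argument of Lemma \ref{Lm1} but push the case analysis further, since now up to three lines may share a direction. Since $\binom{7}{2}=21$ and the no-four-collinear hypothesis means each direction is realized at most $\binom{3}{2}=3$ times, $E$ determines at least $\lceil 21/3 \rceil = 7$ directions a priori; so to get a contradiction I should assume for contradiction that $E$ determines exactly five directions. (Here I am being slightly careful: the statement asks for ``at least six,'' so I must rule out both five and—wait, the a priori bound already gives seven unless some direction is hit exactly three times, which forces the configuration to contain a collinear triple.) The cleaner route: if $E$ determines $\le 5$ directions, then since $21 = \sum (\text{multiplicities})$ with each multiplicity $\le 3$, we would need at least $\lceil 21/3\rceil = 7$ directions, contradiction. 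So in fact $E$ determines at least six directions follows almost immediately once one observes each direction occurs at most $3$ times; the only subtlety is whether five directions with multiplicities $(3,3,3,3,3)$ summing to $15<21$ is impossible—it is—so six is forced with room to spare. I will present this as: each direction is used at most three times, $21/3 = 7 > 6$... but that already gives seven. I suspect the intended statement is genuinely just ``six'' because the authors only need six later, and the proof is the one-line pigeonhole; I will write the short pigeonhole proof and note the bound is not tight.

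Concretely, first I would observe that the hypothesis ``no four collinear'' means every line meets $E$ in at most three points, hence any fixed direction $d$ is determined by at most $\binom{3}{2}=3$ pairs of points of $E$ lying on a common line in direction $d$ (a direction can be carried by several parallel lines, but on each such line at most $3$ points, and two points determine a unique line, so the pairs realizing $d$ are partitioned among the parallel lines of direction $d$, each contributing at most $\binom{3}{2}=3$; more carefully, if the parallel lines carrying $d$ have $a_1,\dots,a_r$ points with $\sum a_i \le 7$ and each $a_i\le 3$, the number of pairs is $\sum \binom{a_i}{2}$, maximized at $\binom{3}{2}+\binom{3}{2}+\binom{1}{2}=6$ — so actually a direction can be realized up to six times). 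This is the real subtlety I glossed: parallel lines. So the correct bound per direction is $6$, giving only $\lceil 21/6\rceil = 4$ directions a priori, which is too weak, and a genuine geometric argument is needed.

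So here is the corrected plan. The main obstacle is exactly this: a single direction can be realized by two triples on parallel lines (plus a leftover point), accounting for six of the twenty-one pairs, so crude counting gives too little. I would therefore argue: suppose $E$ determines at most five directions. For each direction $d$, let $n_d$ be the number of realizing pairs; $\sum_d n_d = 21$. Since at most five directions and each $n_d\le 6$, the multiset $(n_d)$ must be ``large,'' e.g. at least two directions with $n_d \ge 5$, forcing two triples of collinear points in each of two directions. I would then set up coordinates using one such triple as $\{(0,0),(1,0),(2,0)\}$ on a horizontal line, a parallel triple on $y=c$, and analyze where the remaining point(s) go and which directions the second ``heavy'' direction can occupy, deriving a polynomial constraint over $\F_7$ with no solution (as in Lemma \ref{Lm1}, where $b^2-b-1=0$ was the obstruction). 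The bookkeeping — enumerating how $21$ splits into five parts each at most $6$, then for each split showing the forced near-pencil or near-grid configuration is geometrically impossible over $\F_7$ — is the technical heart; I expect several sub-cases, each closed by exhibiting an unsatisfiable low-degree equation mod $7$ or an immediate collinearity contradiction.
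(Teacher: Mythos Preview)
Your plan correctly identifies the real obstacle --- parallel lines let a single direction absorb up to six pairs --- but two of the intermediate claims you then rely on are false. First, ``at least two directions with $n_d \ge 5$'' does not follow from $\sum_d n_d = 21$ over five directions with each $n_d \le 6$: the profile $(5,4,4,4,4)$ already sums to $21$ with only one heavy direction. Second, $n_d \ge 5$ does \emph{not} force two parallel collinear triples: the partition $3+2+2$ of the seven points among the lines in direction $d$ yields $\binom{3}{2}+\binom{2}{2}+\binom{2}{2}=5$ pairs with only one triple present. Your downstream coordinate setup (a pair of parallel triples in each of two heavy directions) therefore does not cover the cases that actually arise, and the residual case analysis would be substantially larger than you sketch.

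The paper closes exactly this gap with the Cauchy--Davenport theorem, which you never invoke. If two parallel lines each carried three points of $E$, the difference set between the two triples would have size at least $3+3-1=5$ in $\F_7$, giving five distinct directions between the lines plus the direction of the lines themselves --- already six, a contradiction. Hence the $3{+}3{+}1$ profile is impossible in every direction, the per-direction maximum drops to $5$, attained only by $3{+}2{+}2$, and pigeonhole ($21/5>4$) forces some direction to hit it. The equality case of Cauchy--Davenport (Vosper's theorem) then pins the three collinear blocks to arithmetic progressions with a common step, yielding the explicit normal form $E=\{(0,0),(0,1),(0,2),(1,0),(1,1),(c,d),(c,d+1)\}$, after which a short direction-matching computation over $\F_7$ finishes the argument. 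The missing idea in your plan is precisely this Cauchy--Davenport/Vosper reduction; with it, the proof is a paragraph rather than a multi-case slog.
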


\begin{proof}
Suppose, for the sake of contradiction, that \(E\) determines five or fewer directions.  Since \(\binom{7}{2} = 21\), some direction is determined at least 5 times.  By the Cauchy-Davenport Theorem \cite{TaoVu}, there can be no distinct two parallel lines that each contain three or more points.  And so, by the equality case of Cauchy-Davenport \cite{Vosperone, Vospertwo}, without loss of generality we assume that 
\[
E = \left\{ \left(0, 0 \right), \left(0, 1 \right), \left(0, 2 \right), \left(1, 0 \right), \left(1, 1 \right), \left(c, d \right), \left(c, d+1 \right) \right\}
\]
where \(c \neq 0,1\) and \(d \neq 0\).  Matching directions  implies 
\begin{align*} \left\{ \left(1, -2 \right), \left(1, -1 \right), \left(1, 0 \right), \left(1, 1 \right) \right\} =  \left\{ \left(1, \cfrac{d-2}{c} \right),\left(1, \cfrac{d-1}{c} \right), \left(1, \cfrac{d}{c} \right), \left(1, \cfrac{d+1}{c} \right)\right\}.
\end{align*}

\noindent This  then gives
\[
\Omega := \left\{-2c, - c, 0, c \right\} = \left\{d-2, d-1, d, d+1 \right\}.
\]

Since \(0 \in \Omega\), either \(1 \in \Omega\) or \(-1 \in \Omega\) (or both).  This means \(c= 1,3,4\) or \(-1\). Also, \(\left\{3, 4 \right\}\) is not a subset of \(\Omega\).  This gives \(c=-1\) and \(d=1\), which then implies \(E\) determines at least six directions.
\end{proof}

\begin{lemma} \label{twoparalellines}
Suppose \(E \subset \F_7^3\), \(\left| E \right|=21\), and that \(E\) is spectral.  If \(E\) determines no more than two directions of two distinct planes, then \(E\) is contained in the union of seven parallel lines.
\end{lemma}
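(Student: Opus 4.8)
The plan is to exploit the hypothesis that $E$ determines at most two directions of each of two distinct planes $P$ and $P'$, and to combine this with the size constraint $|E| = 21 = 3 \cdot 7$ together with the structural lemmas already available (Theorem~\ref{equidist}, Theorem~\ref{IMPdimfree}, Corollary~\ref{lemma:planeconcentrate}, Lemma~\ref{lemma:lineconcen}, and Lemma~\ref{LmNotSpectral}). First I would apply Lemma~\ref{lemma:lineconcen} with $p = 7$, $m = 3$: since $\min\{m, p-m\} = \min\{3, 4\} = 3$, we get $\sup_\ell |E \cap \ell| \le 3$. Next, since $E$ determines at most two directions of $P$, there are at least $p - 1 = 6$ directions $d \subset P$ that $E$ does \emph{not} determine; by Theorem~\ref{equidist}, for each such $d$, either $\hat E$ vanishes on $d^\perp \setminus \{0\}$ (equidistribution) or it does not — but "not determining $d$" is weaker than equidistribution, so I must be careful here. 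Actually the right reading: $E$ determines at most two directions of $P$ means among the $p+1 = 8$ directions of $P$, at least $6$ are such that $E$ determines them, hence, since $|E \cap P| \le p$... wait — I would instead interpret "determines no more than two directions" as: the set of directions $d \subset P$ on which $\hat E$ vanishes on $d^\perp \setminus \{0\}$ has size at least $p - 1$. Re-examining the usage in Corollary~\ref{14} and Lemma~\ref{LmNotSpectral}, the phrasing "$E$ determines $d$" and "$E$ is equidistributed on planes orthogonal to $d$" are being used as complementary (a direction $d$ of $P$ is either determined by $E$ or $\hat E$ kills $d^\perp$), so "determines at most two directions of $P$" should mean $\hat E$ vanishes off the origin on at least $p-1$ of the hyperplanes $d^\perp$, $d \subset P$.

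With that reading, Lemma~\ref{LmNotSpectral} applies directly to the plane $P$ (with $k = m = 3$): $E$ is contained in a union of $3$ parallel planes, say with common direction $e := P^\perp$, i.e. $E$ lies in three translates of $e^\perp$. Then I would run the same argument with the second plane $P'$: $E$ is also contained in a union of $3$ parallel planes with common direction $e' := (P')^\perp$. Since $P \ne P'$, the directions $e$ and $e'$ are distinct, so $e^\perp \ne (e')^\perp$ and the two families of three parallel planes are genuinely transverse; their common refinement consists of lines in the direction $e^\perp \cap (e')^\perp =: \ell_0$, a single direction (a one-dimensional subspace, since two distinct planes through the origin in $\F_7^3$ meet in a line). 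Concretely, $E$ sits inside the intersection of (union of 3 planes) $\cap$ (union of 3 planes), which is a union of at most $3 \times 3 = 9$ lines, all parallel to $\ell_0$. The task is then to improve "at most $9$ parallel lines" to "at most $7$ parallel lines" — equivalently, to show $E$ meets at most $7$ of these $9$ candidate lines.

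The main obstacle is precisely this last counting step. Here is the route I expect to work. Project $E$ along $\ell_0$ to get $B \subset \F_7^2$; the $9$ candidate lines become $9$ points of $\F_7^2$ lying on a $3 \times 3$ grid (the intersection pattern of two families of three parallel lines in $\F_7^2$), and $B$ is a subset of this grid with $|B| \le 9$. Moreover $|E| = 21$ is distributed among the fibers over $B$, each fiber having size at most $\sup_\ell |E \cap \ell| \le 3$; so $|B| \ge 21/3 = 7$. To get $|B| \le 7$ I would argue that $|B| = 9$ (the full grid) or $|B| = 8$ forces $E$ to determine too many directions of $P$ or $P'$, or violates the equidistribution/spectrality constraints. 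For instance, if $|B| = 9$ then every fiber has exactly $3$ points; picking coordinates so $\ell_0$ is the $z$-axis and the grid is $\{0,1,2\}^2$ in the $xy$-plane, the "third coordinate" heights must realize a fairly rigid pattern to keep $\hat E$ vanishing where required, and I would derive a contradiction with Lemma~\ref{lemma:lineconcen} applied to lines \emph{not} parallel to $\ell_0$ (each such line meets the configuration in at most $3$ grid-columns, so carries at most... this needs the sup-over-all-lines bound, which is $\le 3$, already consistent — so the contradiction must instead come from the direction count on $P$ or $P'$). Precisely: if $E$ occupies the full $3\times 3$ grid of lines, then within the plane $P$ (which is transverse to $\ell_0$), $E \cap P$ — or rather the way $E$ distributes over the three $e^\perp$-slabs — forces additional determined directions beyond the allowed two, because a $3\times3$ grid of points with multiplicities determines many directions. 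I would make this quantitative by a direct Cauchy--Davenport or pigeonhole count on the at-most-$9$-point projected configuration, exactly in the spirit of Lemma~\ref{Lm1} and Lemma~\ref{Lm2}, concluding that $|B| \le 7$ and hence $E$ lies in a union of at most seven parallel lines, as claimed.
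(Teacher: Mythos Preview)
Your proposal rests on a misreading of the hypothesis. In this paper, ``$E$ determines a direction $d$'' means there exist $e_1,e_2\in E$ with $e_1-e_2$ in direction $d$; it is \emph{not} the complement of ``$E$ is equidistributed on translates of $d^\perp$''. So ``$E$ determines no more than two directions of $P_1$'' says that within every translate of $P_1$, the points of $E$ span at most two directions --- a statement about the difference set of $E$, not about $\hat E$. Consequently Lemma~\ref{LmNotSpectral}, whose hypothesis is an equidistribution condition on $\hat E$, simply does not apply here, and your route through it collapses at the first step.

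The paper's argument uses the hypothesis directly. Any three non-collinear points already determine three directions, so every $P_1$-slice of $E$ is collinear; combined with $\sup_\ell|E\cap\ell|\le 3$ (Lemma~\ref{lemma:lineconcen}) and $|E|=21=3\cdot 7$, every slice is exactly three collinear points. Hence each $x\in E$ lies on a line $\ell_x$ in one of two fixed directions $d_1,d_2\subset P_1$ with $|E\cap\ell_x|=3$, and similarly lines $k_x$ in directions $e_1,e_2\subset P_2$. If one of these four directions is never used, $E$ is already contained in seven translates of the other one from its pair. If all four are used, a short chaining argument (start at $x_0$, follow $\ell_{x_0}$ to $x_1,x_2$, then follow their $k$-lines, and use $\sup_P|E\cap P|\le 7$ to force the next layer of $\ell$-lines to be parallel) shows both $d_1$ and $d_2$ would be determined at least $12$ times, which is impossible since the $21$ points fall into seven $3$-point $P_1$-slices contributing $21$ pairs total.

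Two further remarks on your attempt as written: even under your intended reading, the three parallel planes produced by Lemma~\ref{LmNotSpectral} are translates of $d_1^\perp$ for some $d_1\subset P$, not translates of $P$ itself (so $e=P^\perp$ is not the stacking direction); and the final step --- cutting a $3\times 3$ grid of nine parallel lines down to seven --- is where all the content would live, and you have only sketched a hope for it.
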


\begin{proof}
Call the two planes \(P_1\) and \(P_2\).  By assumption there are directions \(\left\{d_1, d_2 \right\} \subset P_1\) and \(\left\{ e_1, e_2 \right\} \subset P_2\) such that for each \(x \in E\), there is a translate \(\ell_x\) of either \(d_1\) or \(d_2\) with \(x \in \ell_x\) and \( \left| E \cap \ell_x \right|=3\).  Similarly, there is a translate \(k_x\) of either \(e_1\) or \(e_2\) with \(x \in k_x\) and \( \left| E \cap k_x \right|=3\).


Let \(D \subset \left\{ d_1, d_2, e_1, e_2 \right\}\) be such that each \(d \in D\) is determined by \(E\).  Note that \(D\) must contain at least one of \(\{d_1, d_2\}\) and at least one of \(\{e_1, e_2\}\).  If one of these is missing, without loss of generality suppose \(d_2\) is missing, then \(E\) is contained in seven translates of \(d_1\), which proves the claim.

If \(\left| D \right| =4\), fix \(x_0 \in E\) and let \(x_1, x_2\) be the two other points of \(E\) with \(x_1 \in \ell_{x_0}\) and \(x_2 \in \ell_{x_0}\).  After possibly relabeling, we may assume that \(k_{x_1}\) and \(k_{x_2}\) are parallel.  Let \(y_1\), \(y_2\) be the two other points of \(k_{x_1}\), and \(z_1\), \(z_2\) the two other points of \(k_{x_2}\).  Then, since no plane contains eight or more points, \(\ell_{y_1}, \ell_{y_2}, \ell_{z_1}, \ell_{z_2}\) are all parallel.  By symmetry, this means both \(d_1\) and \(d_2\) are determined 12 or more times, which is a contradiction.
\end{proof}


\begin{lemma} \label{twentyoneproj}
Suppose \(f: \F_7^2 \to \left\{0, 1, 2, 3 \right\}\) is supported on three parallel lines \(\ell_1,\ell_2, \ell_3\), 
that for any line \(\ell\),
\begin{equation} \label{eq:linebound}
\sum_{x \in \ell} f\left(x \right) \leq 7,
\end{equation}
and for the lines \(\ell_1,\ell_2, \ell_3\), 
\[
\sum_{x \in \ell_1} f\left(x \right) = \sum_{x \in \ell_2} f\left(x \right)= \sum_{x \in \ell_3} f\left(x \right)=7.
\]

Then, if \(f\left(x \right)=3\) for any \(x \in \F_7^2\), \(f\) is equidistributed on at most two families of parallel lines.
\end{lemma}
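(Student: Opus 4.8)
The plan is to reformulate ``equidistributed on a family of parallel lines'' as the vanishing of explicit linear combinations of the line-profiles of $f$, and then to read the conclusion off from the nonsingularity of the square submatrices of the discrete Fourier matrix over $\F_7$. First I would fix affine coordinates on $\F_7^2$ so that $\ell_i = \{(x,y): y = c_i\}$ with $c_1,c_2,c_3$ distinct, and record the three profiles $b_i\colon\F_7\to\{0,1,2,3\}$, $b_i(x):=f(x,c_i)$, which satisfy $\sum_x b_i(x)=7$ while $f$ vanishes off $\ell_1\cup\ell_2\cup\ell_3$. Of the eight directions of $\F_7^2$, the one parallel to the $\ell_i$ has line-sums $7,7,7,0,0,0,0$, which are not all equal since $7\neq 3=21/7$; so $f$ is never equidistributed on that family, and it suffices to bound the number of equidistributed families among the seven transverse directions. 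A line $L$ in a transverse direction meets each $\ell_i$ in exactly one point, its remaining four points lying off $\operatorname{supp} f$; parametrising the transverse directions by $r\in\F_7$ (with $r=0$ the vertical direction and the nonzero $r$ the six slopes), a short computation shows that $f$ is equidistributed on the $r$-th transverse family if and only if
\[
b_1(s+rc_1)+b_2(s+rc_2)+b_3(s+rc_3)=3\qquad\text{for all }s\in\F_7.
\]

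Next I would apply the discrete Fourier transform in $s$. With $\omega=e^{2\pi i/7}$, $\widehat{b_i}(\xi):=\sum_x b_i(x)\omega^{-x\xi}$, and $\mu:=\omega^{\xi}$ (a primitive seventh root of unity when $\xi\neq 0$), the displayed identity is equivalent to $\mu^{rc_1}\widehat{b_1}(\xi)+\mu^{rc_2}\widehat{b_2}(\xi)+\mu^{rc_3}\widehat{b_3}(\xi)=0$ for every $\xi\neq 0$, the $\xi=0$ case being the automatic $7+7+7=21$. Suppose, for contradiction, that $f$ is equidistributed on three distinct transverse families $r_1,r_2,r_3$. Then for each fixed $\xi\neq 0$ the vector $(\widehat{b_1}(\xi),\widehat{b_2}(\xi),\widehat{b_3}(\xi))$ is annihilated by the $3\times 3$ matrix $(\mu^{r_jc_i})_{1\le j,i\le 3}$, which is exactly the submatrix of the $7\times 7$ Fourier matrix $(\mu^{ab})_{a,b\in\F_7}$ on rows $r_1,r_2,r_3$ and columns $c_1,c_2,c_3$; since $7$ is prime and these index sets have distinct entries, this matrix is nonsingular. (This is Chebotarev's theorem on roots of unity; for a $3\times 3$ minor one may instead argue directly from the $\mathbb{Q}$-linear independence of $1,\mu,\dots,\mu^{5}$, using the affine classification of $3$-point subsets of $\F_7$ to reduce to $\{c_1,c_2,c_3\}=\{0,1,2\}$ and $\{0,1,3\}$, where the relevant generalised Vandermonde determinant is $\pm\prod_{i<j}(x_i-x_j)$ times either $1$ or $x_1+x_2+x_3$, both nonzero because distinct powers of $\mu$ cannot sum to zero in those small counts.) Hence $\widehat{b_i}(\xi)=0$ for all $\xi\neq 0$; with $\widehat{b_i}(0)=7$, Fourier inversion forces $b_i\equiv 1$, so $f$ takes only the values $0$ and $1$, contradicting $f(x)=3$ for some $x$.

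Therefore at most two of the seven transverse families are equidistributed, and together with the support-direction family (which is never equidistributed) $f$ is equidistributed on at most two families of parallel lines. The substance of the argument lies in the two reductions above — observing that the support direction is automatically excluded, and rewriting transverse equidistribution as the shifted identity — after which the conclusion follows immediately from nonsingularity of Fourier minors. I do not expect a serious obstacle; the only point where a genuine (but very short) computation might be wanted is checking that the $3\times 3$ minor is nonzero if one declines to invoke Chebotarev's theorem, and it is worth noting that the line bound \eqref{eq:linebound} is not needed for this particular statement.
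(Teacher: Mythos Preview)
Your argument is correct and is a genuinely different, and considerably cleaner, proof than the paper's. The paper proceeds by a case analysis on the multisets $\{f(x):x\in\ell_i\}$, repeatedly invoking Cauchy--Davenport and the five-point direction lemma to bound how many transverse families can equidistribute; the line bound \eqref{eq:linebound} is used throughout to constrain which multisets can occur. Your route bypasses all of this: after the coordinate choice and the (correct) reduction of transverse equidistribution to the shifted identity $\sum_j b_j(s+rc_j)=3$, the Fourier step turns three equidistributed families into three linear relations on $(\widehat{b_1}(\xi),\widehat{b_2}(\xi),\widehat{b_3}(\xi))$, and Chebotarev's theorem on the minors of the prime-order DFT matrix immediately forces $b_i\equiv 1$. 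This not only avoids the case work but shows more: the hypothesis $f(x)=3$ can be weakened to $f(x)\ge 2$ for some $x$, the line bound \eqref{eq:linebound} is indeed unnecessary, and the argument goes through verbatim for any prime $p$ in place of $7$. The only cost is that you are invoking a nontrivial outside fact; your parenthetical reduction to the two affine types $\{0,1,2\}$ and $\{0,1,3\}$ is a reasonable workaround if one wants to stay self-contained, and the count of affine classes of $3$-subsets of $\F_7$ is correct.
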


\begin{proof}
Suppose \(f\) is equidistributed on three of more families of parallel lines.  Then, by Cauchy-Davenport, for any \(i \neq j\), there are at most five points \(x \in \ell_i \cup \ell_j\) with \(f \left( x \right) \geq 2\) and \(f\) is supported on at most \(4\) points of each \(\ell_i\)

If \(f\left( x \right) = f\left( y \right) =3\) for any two distinct points of \(\ell_1\), then \(f\) is supported on exactly three points of \(\ell_2\) and exactly three points of \(\ell_3\), again by Cauchy-Davenport.  This implies that
\[
\left| f^{-1} \left( 1 \right) \right| = 3, \left| f^{-1} \left( 2 \right) \right| = 0 \,\, \text{or} \left| f^{-1} \left( 1 \right) \right| = 2, \left| f^{-1} \left( 2 \right) \right| = 2.
\]In either case, one arrives at a contradiction.

Without loss of generality, we are reduced to the case where, in multiset notation,
\[
\left\{f\left(x \right) \mid x \in \ell_1 \right\} = \left\{f\left(x \right) \mid x \in \ell_2 \right\} = \left\{0, 0, 0, 1, 1, 2, 3 \right\}.
\]

If
\[
\left\{f\left(x \right) \mid x \in \ell_3 \right\} = \left\{0, 0, 0, 1, 2, 2, 2 \right\} \,\,\text{or} \, =\left\{0, 0, 0, 0, 2, 2, 3 \right\} ,
\]
then \(f\) equidistributes on at most two families of parallel lines.  This can be seen by matching \(1\)'s and \(2\)'s.

If \(f\left( \ell_3 \right) = f \left( \ell_1 \right)\) as multisets, then \(T:= f^{-1} \left\{2, 3 \right\}\) satisfies \(\left| T \right| = 6\).  We will show \(T\) determines at least six directions.  If there is a five element subset of \(T\) with no three points collinear, the claim follows by Lemma \ref{Lm1}.  If not, then \(T\) is contained in the union of three lines.  Since by \eqref{eq:linebound} none  of these three lines are parallel, so  \(T\) determines at least four directions at least three times.  The pairs of points remaining are the two element subsets of \(f^{-1} \left( 3 \right)\).  Notice that \eqref{eq:linebound} implies that \(f^{-1} \left( 3 \right)\) determines three directions.  Therefore, if \(T\) determines no direction four or more times, the claim follows.

What remains is the situation where \(T\) determines some direction at least four times.  In this case, there is a five element subset of \(T\) contained in the union of two parallel lines.  If the three and two element subsets obtained by intersecting \(T\) with each of these lines are not arithmetic progressions of the the same difference, then the claim follows by the equality case of Cauchy-Davenport.  If they are, then without loss of generality,
\[
T' := \left\{ \left(0, 0 \right), \left(0,1 \right), \left(0, 2 \right), \left( 1,0 \right), \left( 1,1 \right) \right\}
\]
is a subset of \(T\).  There is no point \(p \in \F_7^2\) on the line
\[
  \left\{ \left(z, 2 \right) \mid z \in \F_7 \setminus \left\{0 \right\} \right\}
\]
such that \(\left\{ p \right\} \cup T' \) determines five or fewer directions.  This completes the proof.
\end{proof}

\begin{corollary}\label{21}
There is no spectral set of size 21 in  \(\F_7^3\). 
\end{corollary}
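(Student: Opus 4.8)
Suppose, toward a contradiction, that $(E,A)$ is a spectral pair with $|E|=|A|=21$; thus $m=3$, and by Theorem~\ref{them:symmetry} every restriction already established for one member of a spectral pair holds for the other. I will use repeatedly: $\sup_\ell|E\cap\ell|\le 3$ and $\sup_\ell|A\cap\ell|\le 3$ by Lemma~\ref{lemma:lineconcen} (so neither set has four collinear points); $\sup_P|E\cap P|\le 7$ and $\sup_P|A\cap P|\le 7$ by Corollary~\ref{lemma:planeconcentrate}; and the equidistribution dictionary coming from Theorems~\ref{equidist} and~\ref{IMPdimfree}: ``$A$ determines a direction $d$'' is equivalent to ``$E$ is equidistributed on the translates of $d^\perp$,'' and symmetrically with the roles of $E$ and $A$ exchanged. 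The plan is to split on whether one of $E$, $A$ determines at least $p-1=6$ directions of some plane; in the first case the dictionary confines one of the two sets to a union of three parallel planes, and in the second case one steers into the hypothesis of Lemma~\ref{twoparalellines}. Either structural outcome is then killed by projecting onto $\F_7^2$ and invoking Lemma~\ref{twentyoneproj}.

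\emph{Case 1: one of $E$, $A$ -- say $A$ -- determines at least $6$ directions of some plane $P_0$.} This holds, for instance, as soon as $|A\cap P_0|=7$, by Lemma~\ref{Lm2}, or as soon as $A\cap P_0$ contains five points in general position, by Lemma~\ref{Lm1}. By the dictionary, $E$ is equidistributed on the translates of $d^\perp$ for at least $p-1$ directions $d\subset P_0$, so Lemma~\ref{LmNotSpectral} places $E$ in a union of three parallel planes $\Pi_1,\Pi_2,\Pi_3$, each carrying exactly $7$ points of $E$. If $\sup_\ell|E\cap\ell|=2$, Lemma~\ref{slabconcentration} forces the number of these planes to be even, contradicting $3$; so $E$ has three collinear points. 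Projecting $E$ along a direction $e$ contained in the common underlying subspace $\Pi$ of the $\Pi_i$ produces $f\colon\F_7^2\to\{0,1,2,3\}$ (the codomain bound is $\sup_\ell|E\cap\ell|\le 3$) supported on the three parallel lines that are the images of $\Pi_1,\Pi_2,\Pi_3$, each carrying $f$-mass $7$, with every line-sum at most $7$ -- exactly the hypotheses of Lemma~\ref{twentyoneproj} except for the requirement that $f$ attain the value $3$. One chooses $e$ so that some $E\cap\Pi_i$ has three points collinear in direction $e$, making $f$ attain $3$; Lemma~\ref{twentyoneproj} then says $f$ equidistributes on at most two families of parallel lines of $\F_7^2$, which, translated back through the dictionary, means $A$ determines at most two directions of the plane $e^\perp$. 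On the other hand, since $E$ sits in three parallel planes the dictionary forbids $A$ from determining the normal direction of those planes, and an inspection of the few admissible shapes of the sets $E\cap\Pi_i$ (triangles, arcs, collinear triples, and the like) shows that $A$ is nonetheless forced to determine at least three directions of $e^\perp$ -- a contradiction.

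\emph{Case 2: neither $E$ nor $A$ determines $6$ or more directions of any plane.} Then both determine at most $5$ directions of every plane, and in particular, by Lemma~\ref{Lm2}, no plane contains seven points of $E$ or of $A$. The goal is to reach the hypothesis of Lemma~\ref{twoparalellines}, i.e.\ to exhibit two distinct planes of which $E$ determines at most two directions. Partitioning $E$ among the seven cosets of a two-dimensional subspace $P$ and comparing $\binom{21}{2}=210$ with the bounds $\sup_\ell|E\cap\ell|\le 3$ and $\sup_P|E\cap P|\le 6$ -- which respectively limit how many times a single direction of $P$ is realized and how the twenty-one points can be apportioned among the cosets -- produces the two required planes. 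Lemma~\ref{twoparalellines} then puts $E$ inside seven parallel lines; since these carry $21$ points with at most $3$ on each, every one of the seven carries exactly $3$. Projecting $E$ along the common direction $d^*$ of those lines yields a function $g$ on $\F_7^2$ supported on seven points, each of weight $3$; any line of $\F_7^2$ through three of these points would pull back to a plane with more than $7$ points of $E$, so no three of the seven are collinear. Since at most three of the seven parallel lines in any direction can contain two of these points, such a set determines at least seven of the eight directions of $\F_7^2$, so $g$ equidistributes on at most one family of parallel lines; but every direction of $(d^*)^\perp$ that $A$ determines supplies one such equidistributed family, while $A$ must determine at least one direction of $(d^*)^\perp$ because $|A|>7$. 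Pinning down this unique direction and playing it against the equidistributions that $A$ forces on $E$ yields the final contradiction, completing the proof of the Fuglede Conjecture in $\F_7^3$.

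The step I expect to be the main obstacle is the geometric bookkeeping inside Case 1: after $E$ is confined to three parallel planes one must enumerate the possible shapes the set can take inside them, choose the projection direction accordingly, and verify in every instance that the spectrum is compelled to induce at least three independent equidistributions of $E$, overshooting the ceiling of two supplied by Lemma~\ref{twentyoneproj}. The most delicate sub-case is the one in which all three slabs $E\cap\Pi_i$ are seven-point arcs with no three collinear, where one must either locate a usable transversal line or fall back on a parity argument in the spirit of Lemma~\ref{slabconcentration}. This is precisely where the value $3$ in the codomain of $f$ is indispensable, and it is also the reason the method stalls at $p\ge 11$.
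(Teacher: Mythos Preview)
Your overall two-case split matches the paper's, but both cases contain genuine gaps that the paper closes by a different route.

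\textbf{Case 1.} After Lemma~\ref{LmNotSpectral} confines \(E\) to three parallel planes, you choose a single direction \(e\) along which three points of \(E\) are collinear, project, and invoke Lemma~\ref{twentyoneproj} to bound the number of directions of \(e^\perp\) that \(A\) determines by two. To finish you need the opposite inequality (at least three), but you only assert it via ``an inspection of the few admissible shapes,'' and you explicitly flag this as the main obstacle. (Your closing paragraph is also inconsistent: the ``delicate sub-case'' in which every \(E\cap\Pi_i\) is an arc with no three collinear is exactly \(\sup_\ell|E\cap\ell|=2\), which you already eliminated via Lemma~\ref{slabconcentration}.) The paper does \emph{not} try to close this gap. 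Instead it runs the projection argument for \emph{every} direction \(d\subset Q_0\), using Lemma~\ref{twentyoneproj} and Lemma~\ref{Lm2} to show that any \(d\) on which \(E\) is equidistributed in three or more families is determined at most six times. A counting of the \(3\binom{7}{2}=63\) pairs inside the three slabs then forces enough directions \(d\subset Q_0\) with at most two equidistribution families that Lemma~\ref{twoparalellines} applies, placing \(E\) in seven parallel lines. Cauchy--Davenport now gives at least \(36\) directions determined by \(E\), while an explicit tally caps the number of families of parallel planes on which \(E\) equidistributes at \(31\) or \(32\); symmetry transfers this to \(A\), and the two bounds collide.

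\textbf{Case 2.} Your plan is to verify the hypothesis of Lemma~\ref{twoparalellines} by ``partitioning \(E\) among the seven cosets of a two-dimensional subspace \(P\) and comparing \(\binom{21}{2}=210\) with the bounds,'' but no such comparison is written down, and producing two distinct planes of which \(E\) determines at most two directions is a strong conclusion that does not drop out of \(\sup_P|E\cap P|\le 6\) and \(\sup_\ell|E\cap\ell|\le 3\) in any obvious way. Your endgame (``pinning down this unique direction and playing it against the equidistributions'') is likewise only a sketch. The paper handles this case without Lemma~\ref{twoparalellines} at all: from \(\sup_P|E\cap P|\le 6\), if some line meets \(E\) in three points along direction \(d\), then \(E\) equidistributes on at most two families of parallel planes containing \(d\), so \(A\) determines at most two directions of \(d^\perp\); since \(|A|=21\), this forces two parallel three-point lines in \(A\), and Cauchy--Davenport gives six directions of a plane, contradicting the Case~2 hypothesis. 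Hence \(\sup_\ell|E\cap\ell|=2\), which pushes \(\sup_P|E\cap P|\ge 5\), and Lemma~\ref{Lm1} gives six directions of a plane --- again a contradiction. Finally \(\sup_\ell|E\cap\ell|=1\) is absurd for a \(21\)-point set.
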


\begin{proof}
Assume that  \(E \subset \F_7^3\) has size 21 and has spectrum \(A\).  If \(A\) determines six directions of a plane,  then by Lemma \ref{LmNotSpectral}, there is some plane \(Q_0\) such that \(E\) is contained in three disjoint translates of \(Q_0\).

If \(d \subset Q_0\) is determined, then \(E\) is equidistributed on at most four families of parallel planes containing \(d\).  To see this, project by \(d\) to obtain a function \(g\) on \(\F_7^2\) satisfying the hypotheses of Lemma \ref{twentyoneproj} except possible without an \(x\) such that \(g \left( x\right) =3\).  If there is such an \(x\), we are done by Lemma \ref{twentyoneproj}.

So assume \(g: \F_7^2 \to \left\{0, 1, 2\right\}\) and is supported on the parallel lines \(\ell_1, \ell_2, \ell_3\).  Since \(d\) is determined, there is a \(y \in \ell_1\) with \(g \left( y\right) =2\).  If \(g\) were equidistributed on five or more families of parallel lines, there would be points
\[
\left\{w_1, w_2, w_3, w_4, w_5\right\} \subset \ell_2 \cup \ell_3
\]
with \(g \left( y \right)=0\).  Since each such \(\ell_i\) contains at most three zeros, by Cauchy-Davenport, \(g\) is not equidistributed on at least 5 families of parallel lines.  This is a contradiction.

Note that \(E\) determines no direction of \(Q_0\) more than 18 times.  Also, observe that if \(d \subset Q_0\) is determined by \(E\) and there are three or more families of parallel planes containing \(d\) on which \(E\) equidistributes, \(d\) is determined six or fewer times.  Indeed, any direction determined 7 or more times must either have three points on a line or determine the direction on seven distinct parallel lines.  In the former case, we are done by Lemma \ref{twentyoneproj}, in the latter by Lemma \ref{Lm2}.

Let \(s\) be the number of directions \(d \subset Q_0\) for which there are two or fewer families of parallel planes containing \(d\) on which \(E\) equidistributes.  If \(E\) determines six directions of \(Q_0\) then
\[
6 \left(6 - s\right) + 18s \geq 3 \binom{7}{2} = 63,
\]
and so \(s \geq 3\).  If \(E\) determines seven directions of \(Q_0\), then
\[
6 \left(7 - s\right) + 18s \geq 63,
\]
and so \(s \geq 2\).  In either case, by Lemma \ref{twoparalellines}, \(E\) is contained in the union of seven parallel lines, and by Cauchy-Davenport, determines at least 36 directions.

On the other hand, by the previous discussion, if \(E\) determines seven directions of \(Q_0\), it is equidistributed on at most
\[
7\left( 1\right) + 2\left( 2\right) + 4\left( 5\right) =31
\]
families of parallel planes.  If \(E\) determines six directions of \(Q_0\), it is equidistributed on at most
\[
7\left( 2\right) + 2\left( 3\right) + 4\left( 3\right) =32
\]
families of parallel planes.  By symmetry, the same is true for \(A\), and so we reach a contradiction.

Therefore, neither \(E\) nor \(A\) determine more than five directions of any plane.  In particular,
\[
\sup_P \left| E \cap P \right| \leq 6 \,\, \text{and} \,\, \sup_P \left| A \cap P \right| \leq 6.
\]

If \(\left| E \cap \ell \right|=3\) for some line \(\ell\), then \(\sup_P \left| E \cap P \right| \leq 6\) implies \(E\) is equidistributed on at most two families of parallel planes containing \(d\).  So if \(A\) is a spectrum of \(E\), then \(A\) determines at most two directions of \(d^\perp\).  This means that there are two parallel lines \(\ell_1\) and \(\ell_2\) such that \(\left| A \cap \ell_1 \right| = \left| A \cap \ell_2 \right|=3\).  By Cauchy-Davenport, \(A\) determines at least six directions of a plane, which can not be possible.

So \(\sup_\ell \left| E \cap \ell \right| = 2\).  This means there's a plane \(P\) with \(\left| E \cap P \right| \geq 5\), which means by Lemma \ref{Lm1}, \(E\) determines at least six directions of a plane, which again, can not happen.   So we must have \(\sup_\ell \left| E \cap \ell \right| = 1\), which is another contradiction.  This completes the proof of the corollary, as well as the main theorem \ref{thrm:main}.
\end{proof}

\bibliography{Final2.bbl}
\bibliographystyle{amsalpha}

\end{document}